\newtheorem{thm}{Theorem}[section]
\newtheorem{lem}[thm]{Lemma}
\newtheorem{prop}[thm]{Proposition}
\theoremstyle{definition}
\newtheorem{defn}[thm]{Definition}
\newcommand{\C}{\mathbb{C}}
\newcommand{\R}{\mathbb{R}}
\newcommand{\T}{\mathbb{T}}
\newcommand{\LieG}{\mathfrak{g}}
\newcommand{\LieK}{\mathfrak{k}}
\newcommand{\LieA}{\mathfrak{a}}
\newcommand{\LieP}{\mathfrak{p}}
\newcommand{\LieM}{\mathfrak{m}}
\newcommand{\cO}{\mathcal{O}}
\title{The Higson-Mackey Analogy for Finite \\ Extensions of Complex Semisimple Groups}
\author{John R. Skukalek}
\begin{document}

\begin{abstract}
In the 1970's, George Mackey pointed out an analogy that exists between tempered representations of semisimple Lie groups and unitary representations of associated semidirect product Lie groups. More recently, Nigel Higson refined Mackey's analogy into a one-to-one correspondence for connected complex semisimple groups, and in doing so obtained a novel verification of the Baum-Connes conjecture with trivial coefficients for such groups. Here we extend Higson's results to any Lie group with finitely many connected components whose connected component of the identity is complex semisimple. Our methods include Mackey's description of unitary representations of group extensions involving projective unitary representations, as well as the notion of twisted crossed product $C^*$-algebra introduced independently by Green and Dang Ngoc.
\end{abstract}

\maketitle

\section{Introduction}

Let $G$ be a Lie group with finitely many connected components. Results for almost connected locally compact groups due to Chabert, Echterhoff, and Nest \cite{MR2010742} imply that $G$ satisfies the Baum-Connes conjecture for the trivial coefficient algebra $\C$. In the case of complex connected semisimple Lie groups, Higson \cite{MR2391803} verified the conjecture through a development of ideas originating from Mackey \cite{MR53:13478}. Mackey observed that the space of irreducible tempered representations of a connected semisimple Lie group closely resembles the space of irreducible unitary representations of an associated semidirect product Lie group. Given a maximal compact subgroup $K \subseteq G$ with respective Lie algebras $\LieK \subseteq \LieG$, we can form the semidirect product $K \ltimes V$ in which the action of $K$ on the quotient vector space $V = \LieG/\LieK$ is induced by the restriction to $K$ of the adjoint representation of $G$. The Lie groups $G$ and $K \ltimes V$ fit into a so-called smooth deformation of Lie groups $\{ G_t \ | \ 0 \leq t \leq 1 \}$ where $G_t = G$ for $t > 0$ and $G_0 = K \ltimes V$.

Connes \cite[Chapter 2, Section 10]{MR1303779} explained how the above construction results in a continuous field of reduced group $C^*$-algebras $\{ C^*_{\lambda}(G_t) \ | \ t \in [0,1] \}$ that can be used to reformulate the Baum-Connes conjecture with trivial coefficients for $G$ \cite{MR1292018}. Higson showed that in the case of connected complex semisimple Lie groups there exists a bijection between the set of equivalence classes of irreducible tempered representations of $G$ and the set of equivalence classes of irreducible unitary representations of $G_0$. When analyzed using $C^*$-algebras, Higson's bijection amounts to a verification of the Baum-Connes conjecture with trivial coefficients for connected complex semisimple Lie groups. This special case of the conjecture was first confirmed by Penington and Plymen \cite{MR724030}.

This article deals with the extension of Higson's results to almost connected Lie groups with complex semisimple connected component $G$. Such a group has finitely many connected components, and thus is an extension of $G$ by a finite group. We do not assume that the extension is a complex Lie group nor that it complies with any additional restrictions typically imposed upon reductive Lie groups which are allowed disconnectedness. It is our intention that an approach similar to the one explored here could be used as a means to extend future results for connected reductive Lie groups to disconnected reductive Lie groups, as well as almost connected Lie groups exhibiting behavior considered to be pathological within the realm of reductive groups. Below is an outline of our work.

Section 2 is devoted to describing Higson's results for connected complex semisimple Lie groups $G$. First we describe Higson's bijection between the tempered, or reduced, dual $\widehat{G}_{\lambda}$ of $G$ and the unitary dual $\widehat{G_0}$ of the associated semidirect product group. We then describe Higson's bijection at the level of the group $C^*$-algebras $C^*_{\lambda}(G)$ and $C^*(G_0)$, where one finds pairs of subquotients that are Morita equivalent to the same commutative $C^*$-algebra. An elaboration of this subquotient analysis to the continuous field $\{C^*_{\lambda}(G_t) \ | \ t \in [0,1] \}$ ultimately results in the verification of the Baum-Connes conjecture with trivial coefficients for $G$.

In Section 3 we extend Higson's bijection to the almost connected case using Mackey's normal subgroup analysis, also known simply as the Mackey machine \cite[Chapter 3]{MR0396826}. Here a role is played by projective representations of the finite group of connected components. This section is concerned purely with group representations and $C^*$-algebras do not appear. The explicit form of the bijection that we obtain is not used in the remainder of the paper, but is of independent interest.

In Section 4, following Higson's approach, we analyze our bijection using group $C^*$-algebras. We in fact reestablish the existence of such a bijection without considering projective representations and the finer details of the Mackey machine. In order to deal with group extensions that are not semidirect products, we make use of the notion of twisted crossed product $C^*$-algebra due to Green \cite{MR0493349} and Dang Ngoc \cite{dang}. We ultimately find pairs of subquotients that are Morita equivalent to the same twisted crossed product. These twisted crossed products are obtained from a twisted action involving the group of connected components on a direct sum of $C^*$-algebras, each summand of which is Morita equivalent to a commutative algebra appearing in Higson's analysis of the connected case. For disconnected groups, the subquotients need not be Morita equivalent to commutative $C^*$-algebras.

Finally in Section 5 we show how our results lead to a verification of the Baum-Connes conjecture for the almost connected Lie groups under consideration. The argument parallels that of Higon's in the connected case, utilizing the same basic properties of $K$-theory for group $C^*$-algebras. Such an approach to the Baum-Connes conjecture emphasizes group representation theory as opposed to the deeper aspects of $C^*$-algebra $K$-theory.

I would like to thank my dissertation advisor Nigel Higson, who has been an invaluable source of inspiration and guidance, and whose work serves as the foundation of this article. I would also like to thank Paul Baum for conversations both enlightening and entertaining.

\section{Higson's Results for Connected Complex Semisimple Groups}

Consider a semidirect product group $K \ltimes V$ in which $K$ is a compact group and $V$ is a locally compact abelian group. The Mackey machine \cite[Chapter 3]{MR0396826} describes all equivalence classes of unitary representations of such a semidirect product in terms of the characters $\chi : V \to \T = \{ z \in \C | \  |z|=1 \}$ of $V$ and irreducible unitary representations of the isotropy subgroups
\[ K_{\chi} = \{k \in K \ | \ \chi(k^{-1}vk) = \chi(v) \ \forall \ v \in V \}. \]
Given a unitary representation $\tau$ of $K_{\chi}$, we can trivially extend both $\tau$ and $\chi$ to the semidirect product $K_{\chi} \ltimes V$ and form the tensor product unitary representation $\tau \otimes \chi$. Finally, we can consider the induced unitary representation
\[ \text{Ind}_{K_{\chi} \ltimes V}^{K \ltimes V} \tau \otimes \chi. \]
Mackey showed that as $\chi$ varies over a set of representatives of the orbits of $\widehat{V}$ under the action of $K$, and $\tau$ varies over each $\widehat{K_{\chi}}$, every equivalence class in $\widehat{K \ltimes V}$ is represented once.

Mackey realized that there is a strong resemblance between representations of semisimple Lie groups and such semidirect products. Let $G$ be a connected semisimple Lie group with finite center and let $K$ be a maximal subgroup of $G$. The adjoint representation of $G$ induces an action of $K$ on the real vector space $V = \LieG/\LieK$ obtained as the quotient of the Lie algebras $\LieG$ and $\LieK$ of $G$ and $K$, respectively. We can then form the semidirect product Lie group $G_0 = K \ltimes V$. Mackey suggested that their exists a correspondence between equivalence classes of irreducible tempered representations of $G$ and equivalence classes of irreducible unitary representations of $G_0$. Mackey felt that such a correspondence would be measure-theoretic in nature, ignoring sets of irreducible representations with zero Plancherel measure.

Following Mackey's observations, Higson further developed this correspondence between the representation theories of $G$ and $G_0$ in the case that $G$ is additionally assumed to possess a complex structure. Higson discovered a bijection between the entire set of equivalence classes of irreducible tempered representations of $G$ and the entire set of equivalence classes of irreducible unitary representations of $G_0$. 

Higson's bijection utilizes the observation \cite[Lemma 2.2]{MR2391803} that in the complex semisimple case all the isotropy subgroups $K_{\chi}$ are connected. Choosing an Iwasawa decomposition $G=KAN$, the characters $\chi$ representing each orbit in $\widehat{V}$ can be selected in such a way such that the centralizer $M = Z_K(A)$ of $A$ in $K$ is a maximal torus in $K_{\chi}$. The Cartan-Weyl description of $\widehat{K_{\chi}}$ in terms of highest weights combined with Mackey's description of $\widehat{G_0}$ then reveals a bijection
\[ (\widehat{M} \times \widehat{A})/W \overset\cong\longrightarrow \widehat{G_0} \]
where $W = N_K(A)/M$ is the Weyl group of $G$, $N_K(A)$ denoting the normalizer of $A$ in $K$.

All equivalence classes of irreducible tempered representations of a connected complex semisimple Lie group $G$ are accounted for by the unitary principal series of $G$. Given a pair $(\sigma,\varphi)$ in which $\sigma \in \widehat{M}$ and $\varphi \in \widehat{A}$, we have the one-dimensional unitary representation $\sigma \otimes \varphi \otimes 1$ of the Borel subgroup $B=MAN$ involving the trivial representation of $N$. We can then form the induced representation
\[ \text{Ind}_B^G \ \sigma \otimes \varphi \otimes 1. \]
This procedure accounts for all equivalence classes of irreducible tempered representations of $G$. Moreover, two such representations  are equivalent if and only if the associated pairs in $\widehat{M} \times \widehat{A}$ are related by the action of the Weyl group. Thus we have a bijection
\[ (\widehat{M} \times \widehat{A})/W \overset\cong\longrightarrow \widehat{G}_{\lambda}, \]
where $\widehat{G}_{\lambda}$ denotes the tempered, or reduced, dual of $G$, $\lambda$ referring to the (left) regular representation of $G$ on $L^2(G)$. This bijection turns out to be a homeomorphism with respect to the Fell topology on $\widehat{G}_{\lambda}$ \cite{MR0146681}. On the other hand the bijection for $\widehat{G_0}$ cannot be, $\widehat{G_0}$ being a non-Hausdorff space.

Higson's bijection between $\widehat{G}_{\lambda}$ and $\widehat{G_0}$ results from the above bijections with $(\widehat{M} \times \widehat{A})/W$. Although not a homeomorphism, Higson's bijection has the property of preserving minimal $K$-types, which we describe below.

Each irreducible representation $\tau$ of $K$ has associated with it a finite set of characters of $M$. These are the weights of $\tau$ with respect to $M$, i.e. the characters of $M$ that occur in the restriction of $\tau$ to $K$. The Weyl group $W = N_K(M)/M$ acts on the weights of $\tau$. If we use the exponential map to identify each character of $M$ with a linear function $\LieM \to \R$, $\LieM$ denoting the Lie algebra of $M$, then we can partially order the set $\widehat{M}/W$ of $W$-orbits in $\widehat{M}$ as follows:
$W \cdot \{\sigma\} \leq W \cdot \{\sigma'\}$ if the convex hull of $W \cdot \{\sigma\}$ in $\LieM^*$ is a subset of the convex hull of $W \cdot \{\sigma'\}$. Then $\tau$ has a unique orbit of weights, called highest weights, that is the maximum of all orbits of weights of $\tau$. Each highest weight has the special property of occurring with multiplicity one. The correspondence between irreducible representations of $K$ and their highest weights establishes a bijection
\begin{equation*}\label{highest weights}
\widehat{M}/W \overset\cong\longrightarrow \widehat{K} 
\end{equation*}
In this way we are able to transfer the partial ordering of $\widehat{M}/W$ to $\widehat{K}$. It can be shown that the irreducible representations of $G$ and $G_0$ corresponding to $(\sigma,\varphi) \in \widehat{M} \times \widehat{A}$ each have among their $K$-types the irreducible representation $\tau_{\sigma}$ of $K$ with highest weight $\sigma \in \widehat{M}$. Furthermore, $\tau_{\sigma} < \tau$ for all other $K$-types $\tau$ of both representations, making $\tau_{\sigma}$ the unique minimal $K$-type of both representations. The theory of minimal $K$-types was developed by Vogan \cite{MR86j:22021} and plays an important role in describing admissible representations of semismple groups. Analogous with highest weights of representations of compact groups, minimal $K$-types always occur with multiplicity one. Uniqueness of minimal $K$-types is particular to complex semsimple groups, failing  for $SL(2,\R)$. The definition of minimal $K$-type given above must be carefully modified for the general semisimple scenario.

Higson's bijection and property of preserving minimal $K$-types can be summarized as follows.

\begin{thm}\cite[Theorem 2.4, Lemmas 5.2 and 5.6]{MR2391803}\label{higson bijection} 
Let $\pi_{\sigma,\varphi}$ and $\pi_{\sigma,\varphi}^0$ denote, respectively, the irreducible representations of $G$ and $G_0$ correpsonding to the point $(\sigma,\varphi) \in \widehat{M} \times \widehat{A}$. Let $\tau: \widehat{G}_{\lambda} \cup \widehat{G_0} \to \widehat{K}$ denote the map assigning minimal $K$-types. Then setting $\Phi(\pi_{\sigma,\varphi}) = \pi_{\sigma,\varphi}^0$
determines a bijection $\Phi : \widehat{G}_{\lambda} \overset\cong\longrightarrow \widehat{G_0}$
such that the diagram
\[ \xymatrix{\widehat{G}_{\lambda} \ar[rd]_{\tau} \ar[rr]^{\Phi} & & \widehat{G_0} \ar[ld]^{\tau} \\
 & \widehat{K} & } \]
commutes.
\end{thm}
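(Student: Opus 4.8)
The plan is to take $\Phi$ to be the composite of the two parametrizations with $(\widehat{M}\times\widehat{A})/W$ recalled above: the unitary principal series bijection $(\widehat{M}\times\widehat{A})/W\to\widehat{G}_{\lambda}$, $(\sigma,\varphi)\mapsto\text{Ind}_B^G\,\sigma\otimes\varphi\otimes 1$, and the bijection $(\widehat{M}\times\widehat{A})/W\to\widehat{G_0}$ obtained from Mackey's description of $\widehat{G_0}$ together with the Cartan--Weyl theory of the isotropy groups $\widehat{K_{\chi}}$. By construction $\Phi$ is then a bijection sending the class of $\pi_{\sigma,\varphi}$ to that of $\pi_{\sigma,\varphi}^0$, so the whole content of the theorem is commutativity of the triangle, i.e. the assertion that for every $(\sigma,\varphi)$ the representation $\tau_{\sigma}\in\widehat{K}$ of highest weight $\sigma$ is the unique minimal $K$-type of \emph{both} $\pi_{\sigma,\varphi}$ and $\pi_{\sigma,\varphi}^0$. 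I would establish this on each side and then compare.

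On the $G$ side, restricting the principal series to $K$ in the Iwasawa picture $G=KAN$, where $G/B\cong K/M$, gives $\text{Ind}_B^G(\sigma\otimes\varphi\otimes 1)\big|_K\cong\text{Ind}_M^K\sigma$, independently of $\varphi$; hence by Frobenius reciprocity the $K$-types of $\pi_{\sigma,\varphi}$ are exactly those $\tau\in\widehat{K}$ containing $\sigma$ among their $M$-weights, with multiplicity $[\tau|_M:\sigma]$. Since $\sigma$ is the highest weight of $\tau_{\sigma}$ and occurs there with multiplicity one, $\tau_{\sigma}$ is a $K$-type, of multiplicity one. For any other such $\tau$, with highest weight $\sigma'$, the $W$-invariance of $\mathrm{conv}(W\cdot\{\sigma'\})$ together with $\sigma\in\mathrm{conv}(W\cdot\{\sigma'\})$ gives $W\cdot\{\sigma\}\leq W\cdot\{\sigma'\}$, hence $\tau_{\sigma}\leq\tau$ in the transferred order; and this is strict when $\tau\neq\tau_{\sigma}$, because the extreme points of $\mathrm{conv}(W\cdot\{\mu\})$, computed with a $W$-invariant inner product, are exactly $W\cdot\{\mu\}$, so distinct $W$-orbits of dominant weights have distinct convex hulls. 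Thus $\tau(\pi_{\sigma,\varphi})=\tau_{\sigma}$.

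On the $G_0$ side, by Lemma 2.2 the isotropy group $K_{\chi}$ is connected with maximal torus $M$, so its irreducible representations are classified by $M$-highest weights; write $\rho_{\sigma}\in\widehat{K_{\chi}}$ for the one of highest weight $\sigma$ and $\chi\in\widehat{V}$ for the orbit representative attached to $\varphi$, so that $\pi_{\sigma,\varphi}^0=\text{Ind}_{K_{\chi}\ltimes V}^{K\ltimes V}(\rho_{\sigma}\otimes\chi)$. Because there is a single double coset $K\backslash(K\ltimes V)/(K_{\chi}\ltimes V)$ and $K\cap(K_{\chi}\ltimes V)=K_{\chi}$, the Mackey subgroup theorem gives $\pi_{\sigma,\varphi}^0\big|_K\cong\text{Ind}_{K_{\chi}}^K\rho_{\sigma}$. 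Choosing positive root systems for $(K,M)$ and $(K_{\chi},M)$ compatibly, the roots of $K_{\chi}$ forming a subsystem of those of $K$, a highest weight vector of $\tau_{\sigma}$ for $K$ is also $K_{\chi}$-highest of weight $\sigma$, so $\rho_{\sigma}\hookrightarrow\tau_{\sigma}|_{K_{\chi}}$ with multiplicity one and $\tau_{\sigma}$ is a $K$-type of $\pi_{\sigma,\varphi}^0$; conversely any $K$-type $\tau$ of $\pi_{\sigma,\varphi}^0$ contains $\rho_{\sigma}$ in $\tau|_{K_{\chi}}$, hence $\sigma$ among its $M$-weights, so the convex-hull argument again yields $\tau_{\sigma}\leq\tau$ with equality only for $\tau=\tau_{\sigma}$. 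Therefore $\tau(\pi_{\sigma,\varphi}^0)=\tau_{\sigma}=\tau(\pi_{\sigma,\varphi})$, and the triangle commutes.

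I expect the main obstacle to lie not in the $K$-type bookkeeping but in setting up the comparison correctly: on the $G_0$ side one must pass between highest weights for $K_{\chi}$ and for $K$, which requires the compatible choice of positive systems and relies on $K_{\chi}\supseteq M$ being connected (Lemma 2.2); and the upgrade from ``$\leq$'' to strict ``$<$'', hence \emph{uniqueness} of the minimal $K$-type, rests on the extreme-point description of $\mathrm{conv}(W\cdot\{\mu\})$ --- precisely the feature that fails for $SL(2,\R)$. One also has to keep the two $W$-actions on $\widehat{M}\times\widehat{A}$ aligned, the Weyl group $N_K(A)/M$ of the principal-series side versus the Weyl data entering Mackey's parametrization of $\widehat{G_0}$, matched through the common maximal torus $M$; this is where the Iwasawa-compatible choice of orbit representatives $\chi$ is essential, together with the separate inputs that both parametrizing maps are genuine bijections --- Mackey's theorem on one side, and irreducibility of the complex unitary principal series plus the intertwining-operator computation of its equivalences on the other.
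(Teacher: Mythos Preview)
The paper does not give its own proof of this theorem; it is stated with a citation to Higson's work \cite[Theorem 2.4, Lemmas 5.2 and 5.6]{MR2391803}, and the surrounding text in Section~2 is expository, merely outlining the two bijections with $(\widehat{M}\times\widehat{A})/W$ and asserting without argument that $\tau_{\sigma}$ is the unique minimal $K$-type on both sides. Your proposal is a correct and complete fleshing-out of exactly that outline: the Iwasawa compact picture and Frobenius reciprocity on the $G$ side, the Mackey subgroup theorem and compatible positive-root choice on the $G_0$ side, and the convex-hull/extreme-point argument for uniqueness are precisely the ingredients Higson's lemmas supply, so there is nothing to contrast.
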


As already mentioned, $\Phi$ is not a homeomorphism. However, Higson showed that $\Phi$ restricts to a homeomorphism between the subspaces of representations with a fixed minimal $K$-type $\tau$. If $\sigma$ is a highest weight of $\tau$, and $W_{\sigma}$ denotes the isotropy subgroup of $\sigma$ in $W$, these spaces of representations can be identified with the image of $\widehat{A}/W_{\sigma}$ in $(\widehat{M} \times \widehat{A})/W$ under the map $\varphi \mapsto (\sigma,\varphi)$. One can define (\cite[Definition 6.8]{MR2391803}) subquotients of $C^*_{\lambda}(G)$ and $C^*(G_0)$ whose spectra identify with the locally closed subsets of $\widehat{G}_{\lambda}$ and $\widehat{G_0}$ containing representations with minimal $K$-type $\tau$.

\begin{thm}\cite[Propositions 6.10 and 6.12]{MR2391803}\label{higson morita} Let $\tau$ be an irreducible representation with highest weight $\sigma$. Let $C_{\tau}$ and $C_{\tau}^0$ denote the subquotients of $C^*_{\lambda}(G)$ and $C^*(G_0)$ whose irreducible representations correspond to irreducible representations of $G$ and $G_0$ with minimal $K$-type $\tau$. Then $C_{\tau}$ and $C_{\tau}^0$ are Morita equivalent to the commutative $C^*$-algebra $C_0(\widehat{A}/W_{\sigma})$. In particular, the bijection $\Phi$ in Theorem~\ref{higson bijection} restricts to a homeomorphism between locally closed subsets of $\widehat{G}_{\lambda}$ and $\widehat{G_0}$ consisting of representations with minimal $K$-type $\tau$.
\end{thm}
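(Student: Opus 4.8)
The plan is, in each case, to produce a \emph{full} projection in the multiplier algebra of the subquotient whose associated corner is \emph{commutative}, and then to identify that corner with $C_0(\widehat{A}/W_{\sigma})$; since a full corner is strongly Morita equivalent to the whole algebra, this yields the theorem. Fix a highest weight $\sigma$ of $\tau$ and a unit vector $\xi$ in the space of $\tau$, and let $e_{\tau}$ be the associated minimal (rank-one) projection in the $\tau$-isotypical summand $M_{\dim \tau}(\C)$ of $C^*(K) = \overline{\bigoplus_{\rho \in \widehat{K}} M_{\dim \rho}(\C)}$, so that in any unitary representation $\rho$ of $K$ the operator $\rho(e_{\tau})$ is the orthogonal projection onto the span of the $\xi$-lines of the copies of $\tau$ occurring in $\rho$, of rank equal to the multiplicity of $\tau$ in $\rho$. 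Via the inclusions $K \hookrightarrow G$ and $K \hookrightarrow G_0 = K \ltimes V$ we regard $e_{\tau}$ as a multiplier of $C^*_{\lambda}(G)$ and of $C^*(G_0) = C_0(\widehat{V}) \rtimes K$; since multipliers preserve ideals and pass to quotients, $e_{\tau}$ descends to a multiplier of each of $C_{\tau}$ and $C_{\tau}^0$.

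First I would treat $C_{\tau}$. By the discussion preceding the theorem, $\widehat{C_{\tau}}$ is the locally closed subset of $\widehat{G}_{\lambda}$ consisting of the principal series $\pi_{\sigma,\varphi}$, $\varphi \in \widehat{A}$, and under the Fell-topology homeomorphism $(\widehat{M} \times \widehat{A})/W \cong \widehat{G}_{\lambda}$ it is the image of $\widehat{A}/W_{\sigma}$. Since $G$ is complex semisimple, $C^*_{\lambda}(G)$, and hence every subquotient, is CCR, so $\pi_{\sigma,\varphi}(C_{\tau}) = \mathcal{K}(H_{\sigma,\varphi})$ at every point of the spectrum. Each $\pi_{\sigma,\varphi}$ contains $\tau$ with $K$-multiplicity one, since $\tau$ is its minimal $K$-type (Theorem~\ref{higson bijection}; \cite[Lemma 5.2]{MR2391803}); hence $\pi_{\sigma,\varphi}(e_{\tau}) \neq 0$ throughout $\widehat{C_{\tau}}$, so $e_{\tau}$ is full in $C_{\tau}$, while $\pi_{\sigma,\varphi}(e_{\tau})$ has rank one, so $\pi_{\sigma,\varphi}(e_{\tau} C_{\tau} e_{\tau}) = e_{\tau}\,\mathcal{K}(H_{\sigma,\varphi})\,e_{\tau} \cong \C$. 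By fullness $C_{\tau}$ is strongly Morita equivalent to the corner $e_{\tau} C_{\tau} e_{\tau}$; every irreducible representation of this corner is one-dimensional, so the corner is commutative, hence isomorphic to $C_0$ of its spectrum, which by Morita invariance and the homeomorphism above is $\widehat{A}/W_{\sigma}$. Therefore $C_{\tau}$ is Morita equivalent to $C_0(\widehat{A}/W_{\sigma})$.

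The argument for $C_{\tau}^0$ has the same shape. By Mackey's description recalled above, $\widehat{C_{\tau}^0}$ consists of the representations $\pi^0_{\sigma,\varphi} = \text{Ind}_{K_{\chi} \ltimes V}^{K \ltimes V}(\delta_{\sigma} \otimes \chi)$, where $\chi$ lies in the orbit determined by $\varphi$ and $\delta_{\sigma}$ is the irreducible representation of the \emph{connected} group $K_{\chi}$ with highest weight $\sigma$ relative to the maximal torus $M \subseteq K_{\chi}$; combining the crossed-product structure $C^*(G_0) = C_0(\widehat{V}) \rtimes K$ with this parametrization identifies the set, topologically, with $\widehat{A}/W_{\sigma}$. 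As $K \ltimes V$ is CCR, $\pi^0_{\sigma,\varphi}(C_{\tau}^0) = \mathcal{K}(H^0_{\sigma,\varphi})$; and since $\pi^0_{\sigma,\varphi}|_K = \text{Ind}_{K_{\chi}}^{K} \delta_{\sigma}$, Frobenius reciprocity shows that $\tau$ occurs in $\pi^0_{\sigma,\varphi}|_K$ with multiplicity $[\,\tau|_{K_{\chi}} : \delta_{\sigma}\,] = 1$, realizing the minimal $K$-type (\cite[Lemma 5.6]{MR2391803}). Exactly as before, $e_{\tau}$ is full in $C_{\tau}^0$ with one-dimensional corner fibers, so $C_{\tau}^0 \sim e_{\tau} C_{\tau}^0 e_{\tau} \cong C_0(\widehat{A}/W_{\sigma})$. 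Finally, the two Morita equivalences identify $\widehat{C_{\tau}}$ and $\widehat{C_{\tau}^0}$ with the common space $\widehat{A}/W_{\sigma}$, compatibly with the parametrization by $(\sigma,\varphi)$, so the resulting homeomorphism $\widehat{C_{\tau}} \cong \widehat{C_{\tau}^0}$ is the restriction of $\Phi$; this is the last assertion.

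The step I expect to be most delicate is the $G_0$ side: identifying $\widehat{C_{\tau}^0}$, \emph{as a topological space}, with $\widehat{A}/W_{\sigma}$ --- equivalently, showing that the stratum of the globally non-Hausdorff space $\widehat{G_0}$ singled out by the minimal $K$-type $\tau$ is locally closed, Hausdorff, and carries the expected quotient topology, so that $C_{\tau}^0$ is a bona fide subquotient of $C_0(\widehat{V}) \rtimes K$ with the stated spectrum. This is where the full force of the Mackey machine for the crossed product is needed, together with the connectedness of the isotropy groups $K_{\chi}$ (\cite[Lemma 2.2]{MR2391803}) and the highest-weight classification of $\widehat{K_{\chi}}$. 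Everything else --- the CCR property, fullness of $e_{\tau}$, commutativity of the corner, and Gelfand--Naimark --- is routine once these inputs, and the multiplicity-one statements for minimal $K$-types, are in hand.
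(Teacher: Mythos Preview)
Your proposal is correct and is essentially the same argument that Higson gives (and that this paper re-runs in the generalized orbit setting in Proposition~\ref{direct sum}): exploit multiplicity one of the minimal $K$-type to cut down, via a projection coming from $C^*(K)$, to a full corner whose irreducible representations are all one-dimensional, then invoke Gelfand--Naimark and the parametrization $X_{\tau}\cong\widehat{A}/W_{\sigma}$.  The only cosmetic difference is that you compress by a rank-one projection $e_{\tau}$ inside the $\tau$-block of $C^*(K)$ and land directly on a commutative corner, whereas the paper (following Higson) compresses by the full isotypical projection $p_{\tau}$, obtains $p_{\tau}C_{\tau}p_{\tau}\cong C_0(X_{\tau})\otimes\mathrm{End}(V_{\tau})$ by identifying the $K$-commutant $p_{\tau}C_{\tau}^Kp_{\tau}$ as the commutative factor, and then strips off the matrix algebra; the two routes are equivalent and yield the same Morita class.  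Your identification of the one genuinely nontrivial point---that on the $G_0$ side the stratum $\widehat{C_{\tau}^0}$ really is homeomorphic to $\widehat{A}/W_{\sigma}$, which requires the Mackey analysis of $C_0(\widehat{V})\rtimes K$ together with connectedness of the isotropy groups $K_{\chi}$---is exactly right and is what \cite[Proposition~6.12]{MR2391803} establishes.
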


Higson used the above analysis of subquotients to verify the Baum-Connes conjecture with trivial coefficients for $G$. For the moment, $G$ can be any Lie group with finitely many connected components. If we define $G_t = G$ for all $t \neq 0$, then the disjoint union
\[ \mathcal{G} = \bigsqcup_{t \in [0,1]} G_t \]
can be given a topology, in fact the structure of a smooth manifold with boundary \cite[Section 6.2]{MR2391803} \cite[Section 4]{MR2732053}. The algebra $C^{\infty}_c(\mathcal G)$ of smooth, compactly supported functions can be used to generate a $C^*$-algebra $C^*_{\lambda}(\mathcal G)$ of continuous sections of a continuous field of $C^*$-algebras $\{ C^*_{\lambda}(G_t) \ | \ t \in [0,1] \}$.
This continuous field is a deformation in the sense described by Connes and Higson \cite{MR1065438}, producing an asymptotic morphism
\[ \{ \mu_t : C^*_{\lambda}(G_0) \to C^*_{\lambda}(G) \ | \ t \in [0,1] \} \]
and a resulting homomorphism on $K$-theory groups
\[ \mu: K_*(C^*_{\lambda}(G_0)) \to K_*(C^*_{\lambda}(G)). \]
Connes showed \cite[Chapter 2, Section 10, Proposition 9]{MR1303779} that $\mu$ is an isomorphism if and only if $G$ satisfies the Baum-Connes conjecture with trivial coefficients.

Higson showed that $\mu$ is an isomorphism when $G$ is a connected complex semisimple Lie group by elaborating Theorem~\ref{higson morita} to an analysis of the continuous field. In extending this result to the almost connected case, we will make use of the following.

\begin{thm}\cite[Theorem 6.18]{MR2391803}\label{higson constant} Given $\sigma \in \widehat{M}$, $\varphi \in \widehat{A}$, and $t \in [0,1]$, denote by $\pi_{\sigma,\varphi}^t$ the corresponding irreducible representation of $G_t$. Then the map which assigns to $(\varphi,t)$ the irreducible representation of $C^*_{\lambda}(\mathcal G)$ defined by composing evaluation at $t$ with $\pi_{\sigma,\varphi}^t$ establishes a homeomorphism from $\widehat{A}/W_{\sigma} \times [0,1]$ onto its image in the spectrum of $C^*_{\lambda}(\mathcal G)$.
\end{thm}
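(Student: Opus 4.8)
The plan is to exhibit the image of the map in the statement as the spectrum of an explicit subquotient $\mathcal{C}_\tau$ of $C^*_\lambda(\mathcal G)$ — a "field version" of the subquotients $C_\tau$ and $C^0_\tau$ of Theorem~\ref{higson morita} — and then to prove that $\mathcal{C}_\tau$ is Morita equivalent to $C_0(\widehat A/W_\sigma \times [0,1])$ via a Morita equivalence compatible with evaluation at the points of $[0,1]$. Two simplifications are free from the start. First, $C^*_\lambda(\mathcal G)$ is a $C([0,1])$-algebra, so $C([0,1])$ maps into the centre of its multiplier algebra and hence acts by a character on every irreducible representation; that character records the parameter $t$, so representations of $C^*_\lambda(\mathcal G)$ factoring through evaluation at distinct points are never equivalent. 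Thus injectivity of $(\varphi,t)\mapsto \pi^t_{\sigma,\varphi}\circ\mathrm{ev}_t$ in the $t$-variable is automatic, and its behaviour in $\varphi$ at a fixed $t$ is exactly the fibrewise statement of Theorems~\ref{higson bijection} and~\ref{higson morita}: $\pi^t_{\sigma,\varphi}\cong\pi^t_{\sigma,\varphi'}$ iff $\varphi,\varphi'$ lie in one $W_\sigma$-orbit, and the restricted map is a homeomorphism onto a locally closed subset of $\widehat{C^*_\lambda(G_t)}$. So the entire content is continuity across $t$, and in particular the matching at $t=0$.

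To build $\mathcal{C}_\tau$: the maximal compact $K$ embeds in every $G_t$ (in $G_0=K\ltimes V$ as the first factor), and these embeddings assemble into a smooth subfamily $K\times[0,1]\hookrightarrow\mathcal G$, producing a unital $*$-homomorphism $C^*(K)\to M(C^*_\lambda(\mathcal G))$ by left translation. Cutting $C^*_\lambda(\mathcal G)$ by the central projection of $C^*(K)$ attached to $\tau$ and then passing to the subquotient determined, fibre by fibre as in \cite[Definition 6.8]{MR2391803}, by the representations whose minimal $K$-type equals $\tau$, yields a $C([0,1])$-algebra $\mathcal{C}_\tau$ with fibre $C_\tau$ for $t>0$ and $C^0_\tau$ at $t=0$, whose spectrum is precisely $\{\pi^t_{\sigma,\varphi}\circ\mathrm{ev}_t : (\varphi,t)\in\widehat A/W_\sigma\times[0,1]\}$. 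For this subquotient to exist one needs the set of representations of $C^*_\lambda(\mathcal G)$ with minimal $K$-type $\tau$ to be locally closed, i.e. the semicontinuity of minimal $K$-types along the deformation (in particular as $t\to 0$); this should be read off from the explicit smooth structure on $\mathcal G$ together with the fibrewise analysis already done.

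For the Morita equivalence: in the proof of Theorem~\ref{higson morita} the equivalence of $C_\tau$ (resp. $C^0_\tau$) with $C_0(\widehat A/W_\sigma)$ is implemented by a full corner $qC_\tau q$, where $q\in C^*(K)$ is a projection singling out a line inside the multiplicity-one minimal $K$-type, and the resulting isomorphism $qC_\tau q\cong C_0(\widehat A/W_\sigma)$ is the evaluation $\varphi\mapsto(\text{value at }\pi_{\sigma,\varphi})$ — the \emph{same} parametrization at $t=0$. Applying this multiplier projection $q$ to $\mathcal{C}_\tau$, the corner $q\,\mathcal{C}_\tau\,q$ is again a $C([0,1])$-algebra with every fibre $\cong C_0(\widehat A/W_\sigma)$ through compatible identifications, and I would identify it with $C_0(\widehat A/W_\sigma\times[0,1])$ by checking that a continuous section of the field is the same thing as a jointly continuous function on $\widehat A/W_\sigma\times[0,1]$; it suffices to verify this on matrix coefficients of $C^\infty_c(\mathcal G)$, where it reduces to the continuity built into the deformation. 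Since $q$ is full in each fibre of $\mathcal{C}_\tau$, and fullness of an ideal in a $C([0,1])$-algebra can be tested fibrewise, $q$ is full in $\mathcal{C}_\tau$, so $\mathcal{C}_\tau$ is Morita equivalent to $q\,\mathcal{C}_\tau\,q\cong C_0(\widehat A/W_\sigma\times[0,1])$. Hence $\widehat{\mathcal{C}_\tau}$ is homeomorphic to $\widehat A/W_\sigma\times[0,1]$, and unwinding the identifications this homeomorphism is exactly the map in the statement; as $\widehat{\mathcal{C}_\tau}$ is a locally closed subset of $\widehat{C^*_\lambda(\mathcal G)}$ carrying the subspace topology, this is the asserted homeomorphism onto its image.

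The main obstacle is the continuity comparison of the previous paragraph together with the behaviour at $t=0$: proving that the continuous-field structure $\mathcal{C}_\tau$ inherits from $C^*_\lambda(\mathcal G)$ is precisely joint continuity on $\widehat A/W_\sigma\times[0,1]$, equivalently that the deformation carries the principal series $\pi_{\sigma,\varphi}$ onto $\pi^0_{\sigma,\varphi}$ uniformly in $\varphi$. The subsidiary point needed to form $\mathcal{C}_\tau$ at all — local closedness of the $\tau$-stratum, i.e. semicontinuity of minimal $K$-types across the deformation — is of the same nature and should be handled together with it. Everything else (the two reductions, the construction of $\mathcal{C}_\tau$ from the $K$-action, and the transfer of the full-corner Morita equivalence to the field) is formal once Theorem~\ref{higson morita} is in hand.
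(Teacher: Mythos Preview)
This theorem is not proved in the present paper: it is quoted as \cite[Theorem 6.18]{MR2391803} in Section~2, which is devoted to \emph{describing} Higson's results for connected complex semisimple groups, and it is subsequently \emph{used} in Section~5 (to identify $p_{\tau}\mathcal{C}^Kp_{\tau}$ with $C_0(X_{\tau}\times[0,1])$) rather than reproved. So there is no proof in this paper against which to compare your proposal.

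That said, your outline is a faithful reconstruction of the argument one finds in Higson's paper and, in fact, it is precisely the template that the present paper follows in Section~5 when extending the result to the almost connected case: one defines a field-level subquotient $\mathcal{C}_\tau$ using the multiplier projection $p_\tau$ coming from $C^*(K)\to M(C^*_\lambda(\mathcal G))$, passes to a full corner to obtain a commutative $C([0,1])$-algebra, and identifies the corner with $C_0(X_\tau\times[0,1])$ (here $X_\tau\cong\widehat A/W_\sigma$). The paper's Proposition~5.3 and Theorem~5.4 are exactly the disconnected analogues of what you write, and they invoke Theorem~\ref{higson constant} as input rather than derive it. You have also correctly isolated the genuine analytic content: the joint continuity at $t=0$, i.e.\ that the continuous-field structure inherited from $C^*_\lambda(\mathcal G)$ matches the product topology on $\widehat A/W_\sigma\times[0,1]$. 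That is indeed where the work lies in \cite{MR2391803}; everything else is, as you say, formal once the fibrewise Theorem~\ref{higson morita} is available.
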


Theorem~\ref{higson constant} amounts to the fact that the continuous field is assembled from constant fields with fiber $C_0(\widehat{A}/W_{\sigma})$ via operations that are well-behaved with respect to $K$-theory. From here it can be seen that $\mu$ is an isomorphism.

\section{Extension of Higson's Bijection using the Mackey Machine}

We will now establish an explicit extension of Higson's bijection $\Phi$ in Theorem~\ref{higson bijection} to the almost connected case. It is explicit in the sense that it involves an actual construction of the representations involved. The tool for accomplishing this is the general version of the Mackey machine \cite[Chapter 3, Section 10]{MR0396826}. We will in fact obtain sharper results than what we need for the $C^*$-algebra analysis that is to come.

Throughout the remainder of the article we let $\tilde G$ denote a Lie group with finitely many connected components whose connected component of the identity is a complex semisimple group denoted by $G$. The component group $\tilde G/G$ is a finite group, which we denote by $F$. We shall preserve all of the notation used for $G$ in the previous section. Thus $K$ shall denote a maximal compact subgroup of $G$. Let
\[ \tilde K = N_{\tilde G}(K)\]
denote the normalizer of $K$ in $\tilde G$. It is not difficult to see that $\tilde K$ is a maximal compact subgroup of $\tilde G$. Since $G$ is semisimple, the Cartan decomposition of $\LieG$ implies that the normalizer of $K$ in $G$ is equal to $K$, so that $\tilde K \cap G = K$. The uniqueness of the maximal compact subgroup $K$ up to conjugation in $G$ implies that $\tilde K$ has nonempty intersection with every connected component of $\tilde G$. More generally, according to work of Borel \cite{MR1661166}, every Lie group with finitely many connected components has a maximal compact subgroup unique up to conjugation that has nonempty intersection with every connected component. The inclusion map $\tilde K \to \tilde G$ induces an isomorphism $\tilde K/K \overset\cong\longrightarrow \tilde G/G = F$.

We identify the Lie algebras of $\tilde G$ and $\tilde K$ with the Lie algebras of their connected components, $\LieG$ and $\LieK$. We form the semidirect product Lie group
\[ \tilde G_0 = \tilde K \ltimes V, \quad V = \LieG/\LieK. \]
Observe that
\[ \tilde G_0/G_0 \cong \tilde K/K \cong \tilde G/G = F. \]
In this way we are able to identify the groups of connected components of $\tilde G$, $\tilde K$, and $\tilde G_0$ with the same finite group $F$.

The component group $F$ acts on the unitary duals of $G$, $K$, and $G_0$ according to the manner in which any quotient group acts on the unitary dual of a closed normal subgroup.

\begin{defn}
Given $f \in F$, let $x \in \tilde G$ and $y \in \tilde K$ be such that $xG = yG = f$. Let $\pi$, $\tau$, and $\pi^0$ be unitary representations of $G$, $K$, and $G_0$, respectively. We define $f \cdot \pi = x \cdot \pi$, $f \cdot \tau = y \cdot \tau$, and $f \cdot \pi^0 = y \cdot \pi^0$ to be the unitary representations defined by
\[ [f \cdot \pi](g) = [x \cdot \pi](g) = \pi(x^{-1}gx) \]
\[ [f \cdot \tau](k) = [y \cdot \tau](k) = \tau(y^{-1}ky) \]
\[ [f \cdot \pi^0](g) = [y \cdot \pi^0](g_0) = \pi^0(y^{-1}g_0y) \]
for all $g \in G$, $k \in K$, and $g_0 \in G_0$.
\end{defn}

From this we obtain actions of $F$ on the unitary duals $\widehat{G}$, $\widehat{K}$, and $\widehat{G_0}$. Since $x \in \tilde G$ fixes the left regular representation of $G$, the action of $F$ on $\widehat{G}$ restricts to an action of $F$ on the reduced, or tempered, dual $\widehat{G}_{\lambda}$.

\begin{prop}\label{equivariance}
Higson's bijection $\Phi$ and the assignment $\tau$ of minimal $K$-types in Theorem~\ref{higson bijection} are equivariant with respect to the action of $F$.
\end{prop}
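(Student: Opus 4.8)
The plan is to verify equivariance by tracking how the $F$-action interacts with each of the three bijections with $(\widehat{M} \times \widehat{A})/W$ that compose to give $\Phi$, and then to check separately that minimal $K$-types are sent to $F$-translated minimal $K$-types. The key point is that $\tilde K$ normalizes $K$, and we may arrange that $\tilde K$ also normalizes the data $(A, N, M)$ of the Iwasawa and Borel decompositions: indeed, since all Iwasawa decompositions of $G$ are conjugate under $K$, for any $y \in \tilde K$ the subgroup $yANy^{-1}$ is again part of an Iwasawa decomposition, and after adjusting $y$ within its coset $yK = f$ we may assume $y$ normalizes $A$, $N$, hence also $M = Z_K(A)$. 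Thus $F$ acts on $\widehat M$ and on $\widehat A$, and these actions are compatible with the $W$-action (since $\tilde K$ normalizes $N_K(A)$ and $M$), inducing an action of $F$ on $(\widehat M \times \widehat A)/W$.

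First I would show that, under this arrangement, the three bijections in Section 2 are all $F$-equivariant. For the bijection $(\widehat M \times \widehat A)/W \cong \widehat G_\lambda$: conjugating $\mathrm{Ind}_B^G\,\sigma\otimes\varphi\otimes 1$ by $x \in \tilde G$ with $xG = f$ yields $\mathrm{Ind}_{xBx^{-1}}^G\,(x\cdot(\sigma\otimes\varphi\otimes 1))$, and choosing $x \in \tilde K$ normalizing $B$ this is $\mathrm{Ind}_B^G\,(f\cdot\sigma)\otimes(f\cdot\varphi)\otimes 1$, i.e. the representation attached to $f\cdot(\sigma,\varphi)$. The same computation, with induction in stages through $K_\chi \ltimes V$ and the fact that $y \in \tilde K$ carries $K_\chi$ to $K_{y\cdot\chi}$ and preserves the normalization making $M$ a maximal torus, handles the bijection $(\widehat M \times \widehat A)/W \cong \widehat{G_0}$. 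For the Cartan–Weyl bijection $\widehat M/W \cong \widehat K$, equivariance is the statement that conjugation by $y \in \tilde K$ sends the irreducible $K$-representation with highest weight $\sigma$ to the one with highest weight $y\cdot\sigma$; this follows because conjugation by $y$ is an automorphism of $K$ preserving the maximal torus $M$ and (after the above adjustment of $y$) the choice of positive roots, hence preserving the highest-weight order. Composing, $\Phi$ intertwines the $F$-action on $\widehat G_\lambda$ with that on $\widehat{G_0}$.

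Finally, for the minimal $K$-type map $\tau$, I would argue that $\tau(f\cdot\pi) = f\cdot\tau(\pi)$ directly from the intrinsic characterization: the partial order on $\widehat K$ transported from $\widehat M/W$ is preserved by the automorphism of $K$ given by conjugation by $y$ (since that automorphism preserves $M$, the Weyl group, and the convex-hull order on $\LieM^*$ up to the induced linear isomorphism), and the set of $K$-types of $f\cdot\pi = y\cdot\pi$ is exactly the $y$-translate of the set of $K$-types of $\pi$; hence the unique minimal element is translated accordingly. This gives commutativity of $\tau$ with the $F$-actions on both $\widehat G_\lambda$ and $\widehat{G_0}$, completing the proof.

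The main obstacle I anticipate is the bookkeeping around choosing, for each $f \in F$, a representative $y \in \tilde K$ that simultaneously normalizes $A$, $N$, and $M$ and is compatible with a fixed choice of positive roots — i.e. checking that the $F$-action is genuinely well-defined on the intermediate object $(\widehat M \times \widehat A)/W$ independent of these choices, and that the isotropy subgroup $K_\chi$ being connected (Higson's Lemma 2.2) is exactly what is needed for the Mackey-machine side to behave functorially under conjugation. Once the representatives are pinned down, each individual equivariance check is a routine transport-of-structure argument.
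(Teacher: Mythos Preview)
Your proposal is correct and follows essentially the same route as the paper's proof: choose a representative $y \in \tilde K$ for $f \in F$ that normalizes $A$ (hence $M$), then verify directly that $f\cdot\pi_{\sigma,\varphi} \cong \pi_{y\cdot\sigma,y\cdot\varphi}$ on the $G$-side and $f\cdot\pi^0_{\sigma,\varphi} \cong \pi^0_{y\cdot\sigma,y\cdot\varphi}$ on the $G_0$-side, with the minimal $K$-type check falling out of the same computation.

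The one point of divergence is that you further adjust $y$ to normalize $N$ (equivalently, to preserve the positive system), whereas the paper does not: it allows $yNy^{-1} \neq N$, notes that induction from $MA\,yNy^{-1}$ and from $MAN$ give equivalent principal series, and for the highest-weight bookkeeping appeals instead to the convex-hull characterization of the partial order (which is preserved by any linear isomorphism of $\LieM^*$, regardless of whether positivity is preserved). Your extra normalization is legitimate---the Weyl group acts transitively on positive systems, so one can always make this further adjustment within the coset $yK$---and it buys you a slightly cleaner statement that the Cartan--Weyl bijection $\widehat M/W \cong \widehat K$ is $F$-equivariant on the nose. The paper's version avoids having to check that such a simultaneous normalization of $A$ and $N$ is possible. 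Both are routine transport-of-structure arguments, as you anticipated.
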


\begin{proof}
Consider the principal series representation $\pi_{\sigma,\varphi} \in \widehat{G}_{\lambda}$. Let $y \in \tilde K$. If $\LieP$ denotes, as usual, the orthogonal complement of $\LieK$ in $\LieG$ with respect to the Killing form, then the Lie algebra $\LieA$ of $A$ is a maximal abelian subspace of $\LieP$, which is mapped by $Ad(y)$, $Ad$ denoting the adjoint representation, onto another maximal abelian subspace of $\LieP$. Since all such subspaces of $\LieP$ are conjugate via $K$ \cite{MR1920389}, there exists $k \in K$ such that $Ad(yk)\LieA = \LieA$. Thus we may assume that $f \in F$ is represented by $y \in \tilde K$ that normalizes $A$, and therefore also normalizes $M$, the centralizer of $A$ in $K$. The automorphism $g \mapsto ygy^{-1}$ of $G$ provides an equivalence between $y \cdot \pi_{\sigma,\varphi}$ and the induced representation $\text{Ind}_{MAyNy^{-1}} y \cdot \sigma \otimes y \cdot \varphi \otimes 1$. This latter representation is equivalent to $\text{Ind}_{MAN} y \cdot \sigma \otimes y \cdot \varphi \otimes 1$. Thus $f \cdot \pi_{\sigma,\varphi}$ is the principal series representation $\pi_{y \cdot \sigma,y \cdot \varphi}$ in which $\sigma$ and $\varphi$ have been acted upon by $y$.

Consider $\pi^0_{\sigma,\varphi} = \Phi(\pi_{\sigma,\varphi}) \in \widehat{G_0}$ corresponding to $\pi_{\sigma,\varphi}$. Thus we extend $\varphi$ to a character of $V$ defined by $v \mapsto \varphi(\text{exp}(P(v)))$ where $P: V \to V$ is the orthgononal projection onto $\LieA$ with respect to the Killing form. In order to observe that $\Phi(f \cdot \pi_{\sigma,\varphi}) = f \cdot \Phi(\pi_{\sigma,\varphi})$, we need to know that $y$ maps this character of $V$ to the character of $V$ obtained by extending $y \cdot \varphi$. This follows from the fact that $y$ normalizes $A$ and therefore commutes with $P$. Now observe that $yK_{\varphi}y^{-1} = K_{y \cdot \varphi}$, where we have identified $\varphi$ with its extension to $V$. If $\tau$ denotes the irreducible representation of the isotropy subgroup $K_{\varphi}$ with highest weight $\sigma$, then $y \cdot \sigma$ is a weight of $y \cdot \tau \in \widehat{K_{y \cdot \varphi}}$ with the same highest weight vector $v$, since $(y \cdot \tau)(m)v = \tau(y^{-1}my)v = \sigma(y^{-1}my)v = (y \cdot \sigma)(m)v$ for all $m \in M$. Since $Ad(y)$ preserves the Killing form, $y \cdot \sigma$ must be the highest weight of $y \cdot \tau$. Thus $y \cdot (\pi^0_{\sigma,\varphi})$ is equivalent, using the automorphism $(k,v) \mapsto y(k,v)y^{-1}$ of $G_0$, to the induced representation $\text{Ind}_{K_{y \cdot \varphi} \ltimes V}^{G_0} y \cdot \tau \otimes y \cdot \varphi$, which is precisely $\Phi(f \cdot \pi_{\sigma,\varphi})$. This also verifies that the minimal $K$-type of $f \cdot \pi_{\sigma,\varphi}$ and $f \cdot \pi^0_{\sigma,\varphi}$ is $f \cdot \tau$ where $\tau$ is the minimal $K$-type of $\pi_{\sigma,\varphi}$ and $\pi_{\sigma,\varphi}^0$.
\end{proof}

We will use the Mackey machine to describe irreducible representations of $\tilde G$ and $\tilde G_0$ in terms of irreducible representations of $G$ and $G_0$. Mackey's construction proceeds as follows. Let $\pi : G \to U(H_{\pi})$ be an irreducible unitary representation of $G$ in the Hilbert space $H_{\pi}$. Consider the isotropy subgroup
\[ \tilde G_{\pi} = \{ x \in \tilde G \ | \ x \cdot \pi \cong \pi \}. \]
We define isotropy subgroups $F_{\pi}$ and $\tilde K_{\pi}$ of $F$ and $\tilde K$ similarly. Note that $\tilde G_{\pi}/G$ and $\tilde K_{\pi}/K$ can both be identified with $F_{\pi}$.
For each $x \in G_{\pi}$, there exists a unitary operator
\[ U_x^{\pi} : H_{\pi} \to H_{\pi} \]
satisfying
\[ \pi(x^{-1}gx) = U_x^{\pi *}\pi(g)U_x^{\pi} \]
for all $g \in G$.
In other words, $U_x^{\pi}$ intertwines $\pi$ and $x \cdot \pi$. Since $\pi$ is irreducible, $U_x^{\pi}$ is uniquely determined up to multiplication by an element of the circle group $\T = \{ z \in \C \ | \ |z| = 1 \}$. Note that for each $g \in G$, $U_g^{\pi} = \pi(g)$ is one such intertwining operator.
Consider a section $s : F_{\pi} \to \tilde G_{\pi}$, that is, a map that chooses from every coset $f \in F_{\pi}$ a representative $s(f) \in \tilde G_{\pi}$, so that $s(f)G = f$ for all $f \in F_{\pi}$. Assume that $s$ selects the identity element from $G$. Now for each $f \in F_{\pi}$, select an intertwining operator $U_{s(f)}^{\pi}$ as above. We assume that $U_{s(G)}^{\pi}$, $s(G)$ being the identity element of $G$ is the identity map. Now, for each $f \in F_{\pi}$ and $g \in G$, we define $U_{s(f)g}^{\pi} = U^{\pi}_{s(f)}\pi(g)$. In this way we obtain a map $U^{\pi} : \tilde G_{\pi} \to U(H_{\pi})$ whose restriction to $G$ is $\pi$. Given $x,y \in \tilde G_{\pi}$, the product $U_x^{\pi}U_y^{\pi}$ of the corresponding intertwining operators intertwines $\pi$ and $xy \cdot \pi$. Note that for $g,h \in G$, we have $U_{gh}^{\pi} = U_g^{\pi}U_h^{\pi}$. In this way, $\pi \in \widehat{G}$, along with our choice of section $s$ and intertwining operators $U_{s(f)}^{\pi}$, determine a function
\[ \omega_{\pi} : F_{\pi} \times F_{\pi} \to \T  \]
such that
\[ U_{xy}^{\pi} = \omega_{\pi}(xG,yG)U_x^{\pi}U_y^{\pi} \]
for all $x,y \in \tilde G_{\pi}$.
Such a function is known as a \textit{Schur multiplier} on $F_{\pi}$. Our choice of intertwining operators has effectively extended $\pi$ to an irreducible \textit{projective} unitary representation $U^{\pi}$ of $\tilde G_{\pi}$. Let $\rho$ be an irreducible projective representation of $F_{\pi}$ with Schur multiplier $\omega_{\pi}^{-1}$, i.e. $\omega_{\pi}^{-1}(xG,yG) = \omega_{\pi}(xG,yG)^{-1}$ for all $x,y \in \tilde G_{\pi}$. We can trivially extend $\rho$ to $\tilde G_{\pi}$ and $\omega_{\pi}^{-1}$ to a multiplier of $\tilde G_{\pi}$. The tensor product
$U^{\pi} \otimes \rho$ is then an ordinary unitary representation of $\tilde G_{\pi}$, and we may form the induced representation
\[ \text{Ind}_{\tilde G_{\pi}}^{\tilde G} U^{\pi} \otimes \rho. \]
It is an irreducible unitary representation of $\tilde G$. With $\pi$ fixed, its equivalence class depends only on the equivalence class of the projective representation $\rho$, while inequivalent projective representations produce inequivalent representations of $\tilde G$. An induced representation involving a representation of $G$ equivalent to $f \cdot \pi$ for some $f \in F$ is equivalent to a representation involving $\pi$. Induced representations involving representations of $G$ that belong to distinct $F$-orbits in $\widehat{G}$ are inequivalent.

Under favorable circumstances, the above procedure produces all equivalence classes of irreducible unitary representations of $\tilde G$. In Mackey's terms, such favorable circumstances occur when $G$ is \textit{regularly embedded} in $\tilde G$ \cite[Chapter 3, Section 8]{MR0396826}. The fact that we are in such circumstances can be seen from the fact that $G$, or rather the $C^*$-algebra $C^*(G)$, is type I, or equivalently, in Dixmier's terminology, postliminal \cite[9.1]{MR0458185}, while the quotient $\tilde G/G$ is finite. More generally, it suffices to know that $G$ is type I and $\tilde G$ second-countable, while the orbits in $\widehat{G}$ under the action of $\tilde G/G$ are locally closed, that is, relatively open subsets of their closures \cite[Theorem 1]{glimm}, \cite[4.4.5]{MR0458185}.

In our situation in which $\tilde G/G$ is finite, the restriction to $G$ of any irreducible unitary representation of $\tilde G$ is a direct sum of irreducible unitary representations of $G$ belonging to the same $F$-orbit in $\widehat{G}$, and each representation appears with the same multiplicity. The original representation of $\tilde G$ can be obtained as an induced representation as above beginning with any one of these representations of $G$.

The regular representation of any locally compact group can be obtained as the induced representation of the regular representation of any closed subsgroup. It follows then \cite[Theorem 4.3]{MR0159898} that Mackey's description of $\widehat{\tilde G}$ produces an element of $\widehat{\tilde G}_{\lambda}$ if and only if $\pi \in \widehat{G}_{\lambda}$.

Proposition~\ref{equivariance} implies in particular that corresponding representations $\pi$ and $\Phi(\pi)$ have identical isotropy subgroups of $F$. We wish to compare the Schur multipliers arising from such a pair of representations.

\begin{lem}\label{section} Let $\pi = \pi_{\sigma,\varphi} \in \widehat{G}_{\lambda}$. Let $\tau$ be the minimal $K$-type of $\pi$ with highest weight $\sigma \in \widehat{M}$. Then there exists a section $s: F_{\pi} \to \tilde{K}_{\pi}$ such that for every $f \in F_{\pi}$, $s(f)$ does the following:

\begin{enumerate}
\item normalizes $M$ and $A$ while fixing $\sigma$ and $\varphi$.
\item fixes the minimal $K$-type of $\tau$.
\item fixes the character of $V$ corresponding to $\varphi$.
\item normalizes the isotropy subgroup in $K$ of the character in (3) and fixes its irreducible representation with highest weight $\sigma$.
\end{enumerate}
\end{lem}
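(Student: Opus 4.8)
The plan is to build the section $s$ step by step, starting from the argument already used in the proof of Proposition~\ref{equivariance} and then refining it so that the \emph{same} group element simultaneously fixes all the relevant objects attached to $\sigma$ and $\varphi$. First I would take any $f\in F_\pi$ and, as in Proposition~\ref{equivariance}, pick a representative $y\in\tilde K$ of $f$; since $\mathrm{Ad}(y)$ carries the maximal abelian subspace $\LieA\subseteq\LieP$ to another such subspace, and all of these are $K$-conjugate, we may multiply $y$ on the right by a suitable $k\in K$ so that $\mathrm{Ad}(yk)\LieA=\LieA$. Replacing $y$ by $yk$ we get a representative of $f$ that normalizes $A$, hence normalizes $M=Z_K(A)$; this gives item (1)'s ``normalizes $M$ and $A$'' part and, via the commutation of $y$ with the orthogonal projection $P$ onto $\LieA$, also gives that $y$ preserves the character of $V$ extending $\varphi$ up to the $F$-action, i.e.\ $y$ sends it to the character attached to $y\cdot\varphi$. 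The key point is that because $f\in F_\pi$, Higson's bijection and Proposition~\ref{equivariance} force $y\cdot\sigma=\sigma$ and $y\cdot\varphi=\varphi$ as elements of $\widehat M/W$ and $\widehat A/W$ respectively — but this is only equality of $W$-orbits, so $y$ a priori only fixes $(\sigma,\varphi)$ up to an element of $W$.

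The heart of the argument is to promote this ``up to $W$'' statement to an honest pointwise fixing. Since $W=N_K(A)/M$, the discrepancy is represented by some $w\in N_K(A)$ with $w\cdot(y\cdot\sigma)=\sigma$ and $w\cdot(y\cdot\varphi)=\varphi$; replacing $y$ by $wy$ (which still normalizes $A$ and $M$ and still represents $f$, since $w\in K\subseteq G$) we obtain a representative that fixes $\sigma\in\widehat M$ and $\varphi\in\widehat A$ on the nose. This completes item (1). For item (4), note that the isotropy subgroup $K_\varphi$ (identifying $\varphi$ with its extension to $V$) satisfies $yK_\varphi y^{-1}=K_{y\cdot\varphi}=K_\varphi$ once $y$ fixes the relevant character of $V$ — so $y$ automatically normalizes it; moreover $M$ is a maximal torus in $K_\varphi$ (by Higson, \cite[Lemma 2.2]{MR2391803}) and $y$ now fixes $\sigma\in\widehat M$, so by the argument already recorded in Proposition~\ref{equivariance} (namely $(y\cdot\tau)(m)v=\sigma(m)v$ on the highest weight vector, together with $\mathrm{Ad}(y)$ preserving the Killing form) $y$ fixes the irreducible representation of $K_\varphi$ with highest weight $\sigma$, which is item (4). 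Items (2) and (3) then follow: (3) is exactly the statement that $y$ fixes the character of $V$ corresponding to $\varphi$, established above; (2), that $y$ fixes the minimal $K$-type $\tau_\sigma$, follows because $\tau_\sigma$ is the irreducible representation of $K$ with highest weight $\sigma$ and $y$ fixes $\sigma\in\widehat M$, so the same weight-vector computation applied to $K$ in place of $K_\varphi$ shows $y\cdot\tau_\sigma\cong\tau_\sigma$.

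Finally I would check that this construction can be made into a genuine section, i.e.\ that one choice $s(f)$ can be made for each $f\in F_\pi$ with $s$ of the identity coset equal to the identity, which is immediate since we are choosing one representative per coset and may take $s(G)=e$; and that $s$ lands in $\tilde K_\pi$ rather than merely $\tilde K$, which holds because each $s(f)$ represents $f\in F_\pi$ and $\tilde K_\pi/K\cong F_\pi$. I expect the main obstacle to be the promotion step in the middle paragraph: one must be careful that a single Weyl group element $w$ can be chosen to correct the actions on $\sigma$ and on $\varphi$ \emph{simultaneously}, which is where the precise form of Higson's bijection — the fact that $\pi_{\sigma,\varphi}\cong\pi_{\sigma',\varphi'}$ iff $(\sigma,\varphi)$ and $(\sigma',\varphi')$ lie in the same $W$-orbit on the \emph{product} $\widehat M\times\widehat A$ — is essential, and also where one uses that $W$ acts on $\widehat A$ through $N_K(A)/M$ so that the correcting element is realizable inside $K$ while still normalizing $A$ and $M$.
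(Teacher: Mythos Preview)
Your proposal is correct and follows essentially the same route as the paper: first choose a representative in $\tilde K$ normalizing $A$ (hence $M$) via the conjugacy of maximal abelian subspaces of $\LieP$, then use the equivalence criterion for principal series to correct by an element of $N_K(A)$ so that $(\sigma,\varphi)$ is fixed pointwise, and finally deduce (2)--(4) from (1). The only cosmetic difference is that the paper multiplies on the right ($s(f)=yk$) whereas you multiply on the left ($s(f)=wy$), which is immaterial since $K$ is normal in $\tilde K$; your explicit discussion of why a \emph{single} Weyl element corrects both $\sigma$ and $\varphi$ simultaneously is a welcome clarification of a point the paper leaves implicit.
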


\begin{proof} In fact, (1) implies (2), (3), and (4). Let $f \in F_{\pi}$. As in the proof of Proposition~\ref{equivariance}, we may represent $f$ with $y \in \tilde K$ that normalizes $M$ and $A$. We then see that $f \cdot \pi$ is the principal series representation $\pi_{y \cdot \sigma,y \cdot \varphi}$. This being equivalent to $\pi_{\sigma,\varphi}$, there must exist $k \in K$, representing an element of the Weyl group, such that $yk \cdot \sigma = \sigma$ and $yk \cdot \varphi = \varphi$. We then define $s(f) = yk$. Since $s(f) \tau \in \widehat{K}$ has highest weight $s(f) \cdot \sigma = \sigma$, we have $s(f) \cdot \tau = \tau$. In the proof of Proposition~\ref{equivariance}, we made use of the fact that the map $\widehat{A} \to \widehat{V}$ given by extending characters is equivariant with respect to the action of the normalizer of $A$ in $\tilde K$. Thus $s(f)$ fixes the character of $V$ corresponding to $\varphi$. Continuing to denote this character by $\varphi$, we have $s(f)K_{\varphi}s(f)^{-1} = K_{s(f)\cdot \varphi} = K_{\varphi}$. If $\tau$ now denotes the irreducible representation of $K_{\varphi}$ with highest weight $\sigma$, then $s(f) \cdot \tau$ has highest weight $s(f) \cdot \sigma = \sigma$, so $s(f) \cdot \tau = \tau$.
\end{proof}

\begin{prop}\label{multiplier} Let $\pi \in \widehat{G}_{\lambda}$ and define a map $u: F_{\pi} \times F_{\pi} \to K$ by
\[ u(f_1,f_2) = s(f_1f_2)^{-1}s(f_1)s(f_2) \]
where $s: F_{\pi} \to \tilde{K}_{\pi}$ is as in Lemma~\ref{section}.
Let $\tau$ be the minimal $K$-type of $\pi$ and $v$ a highest weight vector of $\tau$. Then $v$ is an eigenvector of $u(f_1,f_2)$ for all $f_1,f_2 \in F_{\pi}$. The function $\omega: F_{\pi} \times F_{\pi} \to \T$ defined by
\[ \tau(u(f_1,f_2))v = \omega(f_1,f_2)v \]
is the Schur multuplier arising from some extension of $\pi$ to a projective representation $U^{\pi}$ of $\tilde G_{\pi}$.
\end{prop}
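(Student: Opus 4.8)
The plan is to prove the two assertions separately. For the first, observe that $u(f_1,f_2)=s(f_1f_2)^{-1}s(f_1)s(f_2)$ lies in $\tilde K$ (each $s(f)\in\tilde K_\pi$) and maps to the identity of $F=\tilde G/G$, hence lies in $\tilde K\cap G=K$; by Lemma~\ref{section}(1) each $s(f)$ normalizes $M$ and $A$, so $u(f_1,f_2)$ lies in $N_K(A)$ and represents an element of $W=N_K(A)/M$. Now for $n\in N_K(A)$ and a weight $\mu$ of $\tau$ one has $\tau(n)H_\tau^\mu=H_\tau^{n\cdot\mu}$ with $(n\cdot\mu)(m)=\mu(n^{-1}mn)$, which follows from the identity $\tau(m)\tau(n)=\tau(n)\tau(n^{-1}mn)$. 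Since each $s(f)$ fixes $\sigma$, so does each $s(f)^{-1}$, and $u(f_1,f_2)\cdot\sigma=s(f_1f_2)^{-1}\cdot(s(f_1)\cdot(s(f_2)\cdot\sigma))=\sigma$. Taking $\mu=\sigma$ and using that the highest weight space $H_\tau^\sigma=\C v$ is one-dimensional, we get $\tau(u(f_1,f_2))(\C v)=\C v$, so $v$ is an eigenvector, and unitarity of $\tau$ forces the eigenvalue $\omega(f_1,f_2)$ into $\T$. This establishes the first assertion and defines $\omega$.

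For the second assertion I would construct the extension $U^\pi$ explicitly. Using multiplicity one of the minimal $K$-type $\tau$ in $\pi|_K$, identify $H_\tau$ with the $\tau$-isotypic subspace $H_\pi[\tau]\subseteq H_\pi$, so that $\pi|_K$ restricts to $\tau$ on $H_\pi[\tau]$ and $v$ is now a vector of $H_\pi$. Fix $f\in F_\pi$. As $\pi$ is irreducible, the unitary intertwiners between the $G$-representations $s(f)\cdot\pi$ and $\pi$ are unique up to a scalar in $\T$. Since $s(f)\in\tilde K=N_{\tilde G}(K)$, conjugation by $s(f)$ is an automorphism of $K$ fixing $\tau$ (Lemma~\ref{section}(2)); one checks that the $\tau$-isotypic subspace of $(s(f)\cdot\pi)|_K$ is again $H_\pi[\tau]$, so every such intertwiner preserves $H_\pi[\tau]$ and restricts there to an intertwiner of the $K$-representations $s(f)\cdot\tau$ and $\tau$. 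Both of these are realized on $H_\tau$ with $\sigma$-weight space exactly $\C v$, so the restriction scales $v$; I therefore let $U^\pi_{s(f)}$ be the unique unitary intertwiner $s(f)\cdot\pi\to\pi$ with $U^\pi_{s(f)}v=v$. For $f=e$ (where $s(e)$ is the identity) this is the identity operator, as required in the Mackey construction, and setting $U^\pi_{s(f)g}=U^\pi_{s(f)}\pi(g)$ for $g\in G$ produces a projective extension $U^\pi$ of $\pi$ to $\tilde G_\pi$, with associated Schur multiplier $\omega_\pi$.

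To finish I would identify $\omega_\pi$ with $\omega$. Writing $s(f_1)s(f_2)=s(f_1f_2)u(f_1,f_2)$ with $u(f_1,f_2)\in K\subseteq G$, the extension rule gives $U^\pi_{s(f_1)s(f_2)}=U^\pi_{s(f_1f_2)}\pi(u(f_1,f_2))$, while the multiplier is defined by $U^\pi_{s(f_1)s(f_2)}=\omega_\pi(f_1,f_2)U^\pi_{s(f_1)}U^\pi_{s(f_2)}$. Evaluating both right-hand sides on $v$: since $U^\pi_{s(f_i)}v=v$ the second gives $\omega_\pi(f_1,f_2)v$, and since $\pi(u(f_1,f_2))$ acts on $H_\pi[\tau]$ as $\tau(u(f_1,f_2))$ the first gives $U^\pi_{s(f_1f_2)}\tau(u(f_1,f_2))v=\omega(f_1,f_2)U^\pi_{s(f_1f_2)}v=\omega(f_1,f_2)v$ by the first assertion. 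Hence $\omega_\pi=\omega$. The step I expect to be the main obstacle is the claim that an intertwiner of the $G$-representations $s(f)\cdot\pi$ and $\pi$ must preserve the minimal $K$-type isotypic subspace and act there by scaling $v$: this is exactly where multiplicity one of the minimal $K$-type, the Cartan--Weyl description of $\widehat{K}$ from Section~2, and the properties of the section from Lemma~\ref{section} all come into play, while the rest is routine bookkeeping with the Mackey machine.
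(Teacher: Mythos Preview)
Your proposal is correct and follows essentially the same approach as the paper. The only organizational difference is that you establish the eigenvector assertion for $\tau(u(f_1,f_2))$ first, working directly in $V_\tau$ via the weight-space permutation $\tau(n)H_\tau^\mu=H_\tau^{n\cdot\mu}$, whereas the paper derives it inside $H_\pi$ after constructing the normalized intertwiners $U^\pi_{s(f)}$; the key inputs---$u(f_1,f_2)\in N_K(A)$ fixing $\sigma$, multiplicity one of $\tau$ in $\pi$ and of $\sigma$ in $\tau$, and the normalization $U^\pi_{s(f)}v=v$---are identical, and the final identification $\omega_\pi=\omega$ by evaluating both expressions for $U^\pi_{s(f_1)s(f_2)}$ on $v$ is the same computation.
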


\begin{proof}
Let $f \in F_{\pi}$ and $y = s(f)$. Consider a unitary operator $U^{\pi}_y : H_{\pi} \to H_{\pi}$ that intertwines $y \cdot \pi$ with $\pi$, so that $U^{\pi}_y\pi(y^{-1}gy) = \pi(g)U^{\pi}_y$ for all $g \in G$. This holds in particular for all $g \in K$, so that $U^{\pi}_y$ intertwines the representation $k \mapsto \pi(y^{-1}ky)$ of $K$ on the $\tau$-isotypical component $H_{\pi}^{\tau}$ of $\pi$ with $\pi$ restricted to $K$. Thus $U^{\pi}_y$ maps $H_{\pi}^{\tau}$ into $H_{\pi}^{y \cdot \tau}$, the $y \cdot \tau$-isotypical component of $\pi$. Since $y \cdot \tau \cong \tau$, we have that $U^{\pi}_y(H_{\pi}^{\tau}) = H_{\pi}^{\tau}$. Let $v \in H_{\pi}^{\tau}$ be a highest weight vector associated to $\sigma \in \widehat{M}$, so that $v \neq 0$ and $\pi(m)v = \sigma(m)v$ for all $m \in M$. Recall that $\tau$ occurs with multiplicity one in $\pi$, and $\sigma$ with multiplicity one in $\tau$, so that $v$ is unique up to a scalar multiple. Observe that $\pi(m)U^{\pi}_yv = U^{\pi}_y\pi(y^{-1}my)v = U^{\pi}_y\sigma(y^{-1}my)v = \sigma(y^{-1}my)U^{\pi}_yv = \sigma(m)U^{\pi}_yv$ for all $m \in M$, since $y$ normalizes $M$ and fixes $\sigma$. Since $U^{\pi}_yv \in H_{\pi}^{\tau}$, we see that $U^{\pi}_yv$ must be a scalar multiple of $v$. Thus we may scale $U^{\pi}_y$ so that it is the identity on the $\sigma$-highest weight subspace of the $\tau$-isotypical component of $\pi$.

Now let $f_1,f_2 \in F_{\pi}$, and observe that $s(f_1f_2)$ and $s(f_1)s(f_2)$ belong to the same connected component of $\tilde K$. Hence there exists a unique $u(f_1,f_2) \in K$ such that $s(f_1f_2)u = s(f_1)s(f_2)$. Like $s(f_1)$, $s(f_2)$, and $s(f_1f_2)$, $u(f_1,f_2)$ normalizes $M$ and $A$ while fixing $\sigma$ and $\varphi$, and thus, as above, $\pi(u(f_1,f_2))$ must act as a scalar $\omega(f_1,f_2) \in \T$ on the $\sigma$-isotypical component of the $\tau$-isotypical component of $\pi$. Thus we have $U^{\pi}_{s(f_1)s(f_2)}v = U^{\pi}_{s(f_1f_2)u(f_1,f_2)}v = U^{\pi}_{s(f_1f_2)}U^{\pi}_{u(f_1,f_2)}v =\omega(f_1,f_2)\pi(s(f_1f_2))v = \omega(f_1,f_2)v = \omega(f_1,f_2)\pi(s(f_1))\pi(s(f_2))v$.
\end{proof}

Lemma~\ref{section} and Proposition~\ref{multiplier} apply to $G_0$ in the same way that they apply to $G$. Since $\pi \in \widehat{G}_{\lambda}$ and $\Phi(\pi) \in \widehat{G_0}$ have the same minimal $K$-type and determine the same isotropy subgroup of $F_{\pi} = F_{\Phi(\pi)}$, $\pi$ and $\Phi(\pi)$ may be extended to projective unitary representations giving rise to identical Schur multipliers. The Mackey machine descriptions of $\widehat{\tilde G}_{\lambda}$ and $\widehat{\tilde G_0}$ now lead us to the following result.

\begin{thm}\label{extended bijection} Associate to each irreducible unitary representation $\tilde{\pi}= \text{Ind}_{\tilde{G}_{\pi}}^{\tilde G} U^{\pi} \otimes \rho$ of $\tilde{G}$ the irreducible unitary representation $\tilde{\Phi}(\tilde{\pi}) = \text{Ind}_{\tilde{G}_{0\Phi(\pi)}  }^{\tilde{G}_0} U^{\Phi(\pi)} \otimes \rho$ of $\tilde{G}_0$. Then we obtain a bijection
\[ \tilde{\Phi} : \widehat{\tilde{G}}_{\lambda} \overset\cong\longrightarrow \widehat{\tilde{G}_0} \]
with the property that an irreducible tempered representation $\pi'$ of $G$ occurs in the restriction of $\tilde{\pi}_{\rho}$ to $G$ if and only 
$\Phi(\pi')$ occurs in the restriction of $\tilde{\Phi}(\tilde{\pi}_{\rho})$ to $G_0$.
\end{thm}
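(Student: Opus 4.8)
The plan is to deduce everything from the Mackey-machine parametrizations of $\widehat{\tilde G}_\lambda$ and $\widehat{\tilde G_0}$ recalled above, exploiting the fact that the projective-representation data attached to $\pi$ and to $\Phi(\pi)$ coincide. I would begin by fixing, once and for all, a set of representatives for the $F$-orbits in $\widehat G_\lambda$. Each $\tilde\pi\in\widehat{\tilde G}_\lambda$ is then equivalent to $\text{Ind}_{\tilde G_\pi}^{\tilde G}U^\pi\otimes\rho$ for a unique such representative $\pi=\pi_{\sigma,\varphi}$ and a unique equivalence class of irreducible $\omega_\pi^{-1}$-projective representations $\rho$ of $F_\pi$; the subscript $\lambda$ on the left forces $\pi\in\widehat G_\lambda$, and conversely. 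Rather than keeping only the cohomology class of $\omega_\pi$, I would pin down the function $\omega_\pi$ itself by building $U^\pi$ from the section $s\colon F_\pi\to\tilde K_\pi$ of Lemma~\ref{section} and the intertwiners normalized as in Proposition~\ref{multiplier}; the analogous parametrization and choices are made on the $\tilde G_0$ side, which is all of $\widehat{\tilde G_0}_\lambda$ since $\tilde G_0=\tilde K\ltimes V$ is amenable, so the codomain in the statement is correct.

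The first thing to check is that $\tilde\Phi$ is well defined. By Proposition~\ref{equivariance}, $\Phi$ is an $F$-equivariant bijection $\widehat G_\lambda\to\widehat{G_0}$, hence carries $F$-orbits bijectively to $F$-orbits, $F_\pi=F_{\Phi(\pi)}$, and $\tilde K_\pi=\tilde K_{\Phi(\pi)}$ as subgroups of $\tilde K$ (both are the preimage of $F_\pi$). Moreover $\Phi(\pi)=\pi^0_{\sigma,\varphi}$ shares the parameters $(\sigma,\varphi)$, the minimal $K$-type $\tau$, and the isotropy group $F_\pi$ with $\pi$, so the section $s$ of Lemma~\ref{section} serves for $\Phi(\pi)$ verbatim. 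Applying Proposition~\ref{multiplier} to $\pi$ and to $\Phi(\pi)$ expresses both $\omega_\pi$ and $\omega_{\Phi(\pi)}$ as the scalar by which $\tau\bigl(u(f_1,f_2)\bigr)$ acts on a highest weight vector of $\tau$, where $u(f_1,f_2)=s(f_1f_2)^{-1}s(f_1)s(f_2)\in K$ is the same group element in both cases. Hence $\omega_\pi=\omega_{\Phi(\pi)}$ as functions $F_\pi\times F_\pi\to\T$, not merely up to a coboundary, so an irreducible $\omega_\pi^{-1}$-projective representation $\rho$ of $F_\pi$ is simultaneously an irreducible $\omega_{\Phi(\pi)}^{-1}$-projective representation of $F_{\Phi(\pi)}$, the tensor product $U^{\Phi(\pi)}\otimes\rho$ is an honest unitary representation of $\tilde G_{0\Phi(\pi)}$, and $\tilde\Phi(\tilde\pi)$ is a genuine irreducible unitary representation of $\tilde G_0$. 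Independence of the chosen orbit representative and of $\rho$ within its class follows from the Mackey machine applied to $\tilde G_0$.

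Bijectivity is then formal: under the two parametrizations, $\tilde\Phi$ is the assignment $(\pi,[\rho])\mapsto(\Phi(\pi),[\rho])$, which is a bijection because $\Phi$ induces a bijection on orbit representatives and, for corresponding representatives, the projective-representation data (the group $F_\pi$ and the multiplier $\omega_\pi$) are literally unchanged; the inverse is produced in the same way from the $F$-equivariant bijection $\Phi^{-1}$. For the restriction property I would use that, $F$ being finite, the restriction to $G$ of $\tilde\pi_\rho=\text{Ind}_{\tilde G_\pi}^{\tilde G}U^\pi\otimes\rho$ is the multiplicity-$\dim\rho$ direct sum of the representations in the orbit $F\cdot\pi$; likewise the representations of $G_0$ occurring in $\tilde\Phi(\tilde\pi_\rho)|_{G_0}$ are exactly those in $F\cdot\Phi(\pi)=\Phi(F\cdot\pi)$. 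Since $\Phi$ is $F$-equivariant and injective, $\Phi(\pi')\in F\cdot\Phi(\pi)$ if and only if $\pi'\in F\cdot\pi$, which is the asserted equivalence.

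The only substantive point in all of this is the identity $\omega_\pi=\omega_{\Phi(\pi)}$, and that has essentially been arranged already by Lemma~\ref{section} and Proposition~\ref{multiplier}: the section $s$ and the cocycle $u$ live in $\tilde K$ and $K$, and the multipliers are computed through the common minimal $K$-type $\tau$. I therefore expect the main care to go not into a hard argument but into checking that the assignment is independent of the auxiliary choices — the orbit representative $\pi$, the section $s$, and the intertwiners $U^\pi_{s(f)}$ — which amounts to tracking how changing these data twists $\omega_\pi$ by a coboundary and correspondingly adjusts the class of $\rho$, compatibly on the $\tilde G$ and $\tilde G_0$ sides.
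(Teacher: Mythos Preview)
Your proposal is correct and follows essentially the same route as the paper: well-definedness from the $F$-equivariance of $\Phi$ (Proposition~\ref{equivariance}) together with the equality of Schur multipliers $\omega_\pi=\omega_{\Phi(\pi)}$ (Lemma~\ref{section} and Proposition~\ref{multiplier}), and bijectivity from completeness of the Mackey machine on both sides together with bijectivity of $\Phi$. You spell out more than the paper does---in particular the restriction statement (via the decomposition of $\tilde\pi_\rho|_G$ as $\dim\rho$ copies of the $F$-orbit of $\pi$) and the independence of auxiliary choices---but these are elaborations of the same argument rather than a different one.
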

\begin{proof}
The fact that $\tilde{\Phi}$ is well-defined is a consequence of the completeness of the Mackey machine description of $\widehat{\tilde G}_{\lambda}$ along with the $F$-equivariance of Higson's bijection $\Phi$ (Proposition~\ref{equivariance}) and the conditions under which the Mackey machine produces equivalent representations. Combinined with injectivity of $\Phi$ we obtain injectivity of $\tilde \Phi$. The surjectivity of $\tilde \Phi$ follows from the completeness of the Mackey machine description of $\widehat{\tilde{G}_0}$ along with the surjectivity of Higson's bijection and the fact that $\pi$ and $\Phi(\pi)$ give rise to identical Schur multipliers (Proposition~\ref{multiplier}).
\end{proof}

\section{Analysis of Subquotients using Twisted Crossed Products}

We now will carry out a $C^*$-algebraic analysis of the bijection in Theorem~\ref{extended bijection}. Actually we will reprove the existence of such a bijection, so that Theorem~\ref{extended bijection} provides more detailed information than is necessary for what follows. As in Higson's treatment of the connected case, we will concern ourselves with subquotients of $C^*_{\lambda}(\tilde G)$ and $C^*(\tilde G_0)$ determined by irreducible representations of $K$. In the almost connected case, a pair of subquotients will be associated with each orbit $\cO$ in $\widehat{K}$ under the action of the finite component group $F$. To simplify our discussion, we shall refer primarily to the group $G$ and point out that it can be replaced everywhere with the group $G_0$.

Recall that if we restrict an irreducible representation of $\tilde G$ to $G$, it decomposes into a direct sum of members of $\widehat{G}_{\lambda}$, which constitute an orbit under the action of $F$. The minimal $K$-types of these irreducible representations of $G$ constitute an orbit in $\widehat{K}$ under the action of $F$.

Recall the partial ordering of $\widehat{K}$ induced by that of $\widehat{M}/W$ and the bijection obtained via highest weights. Suppose that $y \in \tilde K$ normalizes $M$. Identifying characters of $M$ with linear functionals $\LieM \to \R$, the linear automorphism of $\LieM^*$ obtained by differentiating the action of $y$ on $M$ maps the weights of an irreducible representation $\tau$ of $K$ onto the weights of the irreducible representation $y \cdot \tau$ of $K$. Moreover, it maps the convex hull of the weights of $\tau$ onto the convex hull of the weights of $y \cdot \tau$. Thus if $\tau < \tau'$, then $y \cdot \tau < y \cdot \tau'$. It is not possible to have $\tau < y \cdot \tau$, since the orbits in $\widehat{K}$ under the action of $\tilde K$ are finite.

\begin{defn}
Given two $F$-orbits $\cO$ and $\cO'$ in $\widehat{K}$, we write $\cO \leq \cO'$ if there exists $(\tau,\tau') \in \cO \times \cO'$ such that $\tau \leq \tau'$.
\end{defn}

If $\cO \leq \cO'$, then for every $\tau \in \cO$, there exists $\tau' \in \cO'$ such that $\tau \leq \tau'$. Thus if $\cO \leq \cO'$ and $\cO' \leq \cO''$, then there exists $(\tau,\tau',\tau'') \in \cO \times \cO' \times \cO''$ such that $\tau \leq \tau'$ and $\tau' \leq \tau''$. Hence $\tau' \leq \tau''$ and $\cO \leq \cO''$. If this occurs when $\cO'' = \cO$, then $\tau \leq \tau''$ implies that $\tau = \tau''$, $\tau' \leq \tau$, $\tau = \tau'$ and $\cO = \cO'$. Thus we obtain a partial ordering of $\widehat{K}/\tilde K$ that agrees with the usual partial ordering when $\tilde K$ is connected. We shall use this partial ordering to define the subquotients we are interested in.

For each $\tau \in \widehat{K}$, we have the character $\chi_{\tau} : K \to \C$ of $\tau$ defined by $\chi_{\tau}(k) = \text{trace}(\tau(k))$ for all $k \in K$. Let $\pi \in \widehat{G}_{\lambda}$. The isotypical component $H_{\pi}^{\tau}$ of $\pi$ is related to $\chi_{\tau}$ in the following way. The representation of $C^*_{\lambda}(G)$ determined by $\pi$ extends uniquely to a representation of the multiplier algebra $M(C^*_{\lambda}(G))$ of $C^*_{\lambda}(G)$.
There is a homomorphism from the $C^*$-algebra $C^*(K)$ of $K$ into the multiplier algebra of $C^*_{\lambda}(G)$ defined for continuous functions $\psi \in C(K)$ and $\varphi \in C_c(G)$ via the convolution integral
\[ (\psi \varphi)(g) = \int_K \psi(k)\varphi(k^{-1}g) \ dk. \]
With the Haar measure on $K$ satisfying $\int_K 1 \ dk =1$, the function $p_{\tau} : K \to \C$ by defined by $p_{\tau}(k) = \text{dim}(\tau)\chi_{\tau}(k^{-1})$ defines in this way  a projection in the multiplier algebra of $C^*_{\lambda}(G)$, and the operator $\pi(p_{\tau}): H_{\pi} \to H_{\pi}$ is the orthogonal projection onto $H_{\pi}^{\tau}$. We can replace $G$ with $G_0$, $\tilde G$, or $\tilde G_0$, so that $p_{\tau}$ may also be considered a projection in the multiplier algebras of $C^*(G_0)$, $C^*_{\lambda}(\tilde G)$, and $C^*(\tilde G_0)$.

\begin{defn}\label{orbit multipliers}
For each orbit $\cO \subseteq \widehat{K}$, denote by $p_{\cO}$ the projection in $M(C^*_{\lambda}(G))$ defined by
\[ p_{\cO} = \sum_{\tau \in \cO} p_{\tau}. \] We define ideals $A_{\cO}$ and $B_{\cO}$ in $C^*_{\lambda}(G)$, with $B_{\cO}$ an ideal in $A_{\cO}$ as follows:
\[ A_{\cO} = \sum_{\cO' \leq \cO} C^*_{\lambda}(G)p_{\cO}C^*_{\lambda}(G) \]
\[ B_{\cO} =  \sum_{\cO' < \cO} C^*_{\lambda}(G)p_{\cO}C^*_{\lambda}(G). \]
Denote by $C_{\cO}$ the subquotient
\[ C_{\cO} = A_{\cO}/B_{\cO}. \]
In the same way, we define ideals $\tilde A_{\cO}$ and $\tilde B_{\cO}$ in $C^*_{\lambda}(\tilde G)$ and a subquotient $\tilde C_{\cO}$, using $\tilde G$ in place of $G$.
\end{defn}

We shall be interested in correspondence between locally closed subsets of the spectrum of a $C^*$-algebra and the spectra of subquotients \cite[3.2]{MR0458185}. Combining it with the correspondence between the spectrum of $C^*_{\lambda}$ and $\widehat{G}_{\lambda}$, we obtain the following.

\begin{prop}\label{spectra}
The spectrum of $C_{\cO}$ can be identified with the locally closed subset of $\widehat{G}_{\lambda}$ consisting of representations whose minimal $K$-type belongs to $\cO$. The spectrum of $\tilde C_{\cO}$ can be identified with the locally closed subset of $\tilde \pi \in \widehat{G}_{\lambda}$ for which the restriction of $\tilde \pi$ to $G$ contains an element of the spectrum of $C_{\cO}$. In this case, every element of the spectrum of $C_{\cO}$ occurs in the restriction of $\tilde \pi$ to $G$.\end{prop}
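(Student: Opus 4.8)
The plan is to apply the standard correspondence \cite[3.2]{MR0458185} between subquotients of a postliminal $C^*$-algebra and locally closed subsets of its spectrum, and then to translate the $K$-type conditions defining the ideals $A_{\cO}$ and $B_{\cO}$ into statements about minimal $K$-types. Both $C^*_{\lambda}(G)$ and $C^*_{\lambda}(\tilde G)$ are postliminal --- the first because $G$ is type I, the second because $\tilde G$ is then type I as well, being an extension of $G$ by the finite group $F$ --- so the correspondence applies, and we identify the spectrum of $C^*_{\lambda}(G)$ with $\widehat{G}_{\lambda}$ and that of $C^*_{\lambda}(\tilde G)$ with $\widehat{\tilde G}_{\lambda}$. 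The first step is to compute the hull of the elementary ideal $\overline{C^*_{\lambda}(G)\, p_{\cO'}\, C^*_{\lambda}(G)}$. Extending an irreducible $\pi$ to the multiplier algebra, the relation $\pi(a\,p_{\cO'}\,b)=\pi(a)\,\pi(p_{\cO'})\,\pi(b)$ together with the nondegeneracy of $\pi$ shows that $\pi$ annihilates this ideal exactly when $\pi(p_{\cO'})=0$; and since $\pi(p_{\cO'})=\sum_{\tau\in\cO'}\pi(p_{\tau})$ is the orthogonal projection onto $\bigoplus_{\tau\in\cO'}H_{\pi}^{\tau}$, this holds precisely when no $K$-type of $\pi$ lies in $\cO'$.

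Summing over orbits, the spectrum of $A_{\cO}$ (respectively $B_{\cO}$) is then the open set of those $\pi$ possessing a $K$-type whose $F$-orbit is $\leq\cO$ (respectively $<\cO$), so that the spectrum of $C_{\cO}=A_{\cO}/B_{\cO}$ is the locally closed set of $\pi$ having a $K$-type in some orbit $\leq\cO$ but none in any orbit $<\cO$. I would then identify this set with $\{\pi\in\widehat{G}_{\lambda} : \text{the minimal $K$-type of $\pi$ lies in }\cO\}$, using the two facts recalled in Section~2 --- that each $\pi$ has a unique minimal $K$-type $\tau_{\sigma}$, and that $\tau_{\sigma}\leq\tau$ for every $K$-type $\tau$ of $\pi$ --- together with the antisymmetry of the partial order on $\widehat{K}/\tilde K$ established above. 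If $\tau_{\sigma}\in\cO$, then $\cO$ occurs as a $K$-type orbit of $\pi$, while every $K$-type orbit of $\pi$ dominates $\cO$ and so is not $<\cO$; conversely, if $\pi$ has a $K$-type in an orbit $\cO'\leq\cO$, then, since $\tau_{\sigma}$ is dominated by that $K$-type, the orbit $\cO_{0}$ of $\tau_{\sigma}$ satisfies $\cO_{0}\leq\cO'\leq\cO$, and were $\cO_{0}\neq\cO$ the minimal $K$-type $\tau_{\sigma}$ would itself be a $K$-type of $\pi$ in an orbit $\cO_{0}<\cO$, contrary to hypothesis, so $\cO_{0}=\cO$. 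Local closedness needs no separate argument, the set being the difference of two open sets.

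The case of $\tilde C_{\cO}$ runs in parallel: its spectrum is the set of $\tilde\pi\in\widehat{\tilde G}_{\lambda}$ having a $K$-type in an orbit $\leq\cO$ but none in an orbit $<\cO$. Here I would feed in the description of restrictions from Section~3 --- $\tilde\pi|_{G}$ is a multiple of $\bigoplus_{i}\pi_{i}$ for a single $F$-orbit $\{\pi_{i}\}$ in $\widehat{G}_{\lambda}$, and since $K\subseteq G$ the $K$-types of $\tilde\pi$ are exactly those occurring among the $\pi_{i}$ --- together with the $F$-equivariance of the minimal-$K$-type assignment (Proposition~\ref{equivariance}), by which the minimal $K$-types of the $\pi_{i}$ form a single $F$-orbit $\cO_{\tilde\pi}$. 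This orbit $\cO_{\tilde\pi}$ occurs as a $K$-type orbit of $\tilde\pi$ and is dominated by every $K$-type orbit of $\tilde\pi$, so repeating the dichotomy of the previous paragraph with $\cO_{\tilde\pi}$ in the role of $\cO_{0}$ gives $\tilde\pi\in\operatorname{spec}\tilde C_{\cO}$ if and only if $\cO_{\tilde\pi}=\cO$, i.e.\ if and only if some constituent $\pi_{i}$ of $\tilde\pi|_{G}$ --- equivalently, by the orbit property and $F$-equivariance, every constituent --- has minimal $K$-type in $\cO$, hence lies in $\operatorname{spec}C_{\cO}$. The final clause of the proposition then records precisely that in this situation all of the $\pi_{i}$, that is all irreducible constituents of $\tilde\pi|_{G}$, lie in $\operatorname{spec}C_{\cO}$.

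The bulk of the work is bookkeeping. The step requiring the most care is the first one: showing that the ideal generated by a multiplier projection $p$ has hull precisely $\{\pi : \pi(p)=0\}$, which is where one must carefully combine the extension of an irreducible representation to the multiplier algebra with its nondegeneracy. Once this and the postliminality of $C^*_{\lambda}(G)$ and $C^*_{\lambda}(\tilde G)$ are in hand, everything else reduces to the elementary order theory of $F$-orbits of $K$-types together with the Section~3 description of restrictions to $G$.
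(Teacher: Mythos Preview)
Your argument is correct and follows essentially the same route as the paper's own proof: identify the spectrum of the ideal generated by $p_{\cO'}$ via $\pi(p_{\cO'})\neq 0$, sum over orbits to describe the spectra of $A_{\cO}$ and $B_{\cO}$, and then convert the resulting $K$-type condition into a minimal-$K$-type condition using uniqueness of minimal $K$-types (for $G$) and the decomposition of $\tilde\pi|_G$ into an $F$-orbit (for $\tilde G$). You supply more detail than the paper does---notably the explicit postliminality remark, the nondegeneracy argument for the hull, and the order-theoretic dichotomy showing $\cO_0=\cO$---but nothing substantively different.
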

\begin{proof}
The spectrum of $C^*_{\lambda}(\tilde G)p_{\cO}C^*_{\lambda}(\tilde G)$ consists of all $\pi$ in the spectrum of $C^*_{\lambda}(\tilde G)$ such that $\pi(p_{\cO}) \neq 0$, and $\pi(p_{\cO}) \neq 0$ if and only if $\pi(p_{\tau}) \neq 0$ for some $\tau \in \cO$, i.e. $\tau$ is a $K$-type of $\pi$. The spectrum of $\tilde A_{\cO}$ thus consists of all $\pi \in \widehat{\tilde G}_{\lambda}$ for which the orbit $F \cdot \tau$ of some $K$-type $\tau$ of $\pi$ satisfies $F \cdot \tau \leq \cO$. The spectrum of $\tilde B_{\cO}$ consists of all $\pi \in \widehat{\tilde G}_{\lambda}$ for which the orbit $F \cdot \tau$ of some $K$-type $\tau$ of $\pi$ satisfies $F \cdot \tau < \cO$. Thus the spectrum of $\tilde C_{\cO}$ consists of all $\pi \in \widehat{\tilde G}_{\lambda}$ for which the orbit $F \cdot \tau$ of some $K$-type $\tau$ of $\pi$ equals $\cO$, and no orbit $F \cdot \tau'$ of any other $K$-type $\tau'$ satisfies $F \cdot \tau' < \cO$. In the case that $\tilde G = G$, such a $\tau$ would be the unique minimal $K$-type of $\pi$. In the general case, each member of $\cO$ is the unique minimal $K$-type of some irreducible constituent of the restriction of $\pi$ to $G$.
\end{proof}

We wish to analyze $\tilde C_{\cO}$ up to Morita equivalence. Regarding $p_{\cO}$ as an element of the multiplier algebra of $\tilde C_{\cO}$, we see from Proposition~\ref{spectra} that $\tilde C_{\cO}$ is generated by $p_{\cO}$, since every irreducible representation $\pi$ of $\tilde C_{\cO}$ satisfies $\pi(\tilde C_{\cO}p_{\cO}\tilde C_{\cO}) \neq \{0\}$. Thus $\tilde C_{\cO}$ is Morita equivalent to the subalgebra $p_{\cO}\tilde C_{\cO}p_{\cO}$. The same, of course, is true with $C_{\cO}$ in place of $\tilde C_{\cO}$. 

We will use the notion of twisted crossed product $C^*$-algebra due to Green \cite{MR0493349} and Dang Ngoc \cite{dang} to relate $C^*_{\lambda}(\tilde G)$ to $C^*_{\lambda}(G)$ and, in turn, $\tilde C_{\cO}$ to $C_{\cO}$. Ordinary, as opposed to twisted, crossed products suffice in the event that $\tilde G$ is isomorphic to the semidirect product $F \ltimes G$. Let $A$ be any $C^*$-algebra equipped with a continuous action $\alpha: \tilde G \to \text{Aut}(\tilde G)$ of $\tilde G$ by automorphisms of $A$. Let $\sigma : G \to UM(A)$ be a strictly continuous \cite[p. 34]{MR2288954} homomorphism from $G$ into the unitary group of the multiplier algebra of $A$ with the following properties:
\[ \alpha_g(a) = \sigma(k)a\sigma(k)^*, \quad \sigma(xgx^{-1}) = \alpha_x(\sigma(k)), \quad \forall g \in G, a \in A, x \in \tilde G. \]
Note that the automorphism $\alpha_x$ has been extended to the multiplier algebra of $A$.
We refer to $\sigma$ as a \textit{twisting map} for the action $\alpha$, and the pair $(\alpha,\sigma)$ as a \textit{twisted action} of $(\tilde G,G)$. The \textit{twisted crossed product} $C^*$-algebra $(\tilde G,G) \ltimes_{\alpha,\sigma} A$ is the quotient of the ordinary crossed product $\tilde G \ltimes_{\alpha} A$ by the ideal corresponding to irreducible covariant representations $(U,\pi)$ of $(\tilde G,A)$ that \textit{preserve} the twisting map, in the sense that $U_g = \pi( \sigma(g) )$ for all $g \in G$. We can describe $(\tilde G,G) \ltimes A$ can be described more explicitly as follows, quite similar to the manner in which $\tilde G \ltimes A$ is defined, only replacing integrals over $\tilde G$ with integrals, or in our case sums, over $\tilde G/G$.
Let $C(\tilde G,G,A)$ denote the set of continuous functions $\varphi : \tilde G \to A$ such that
\[ \varphi(gx) = \varphi(x)\sigma(g)^* \quad \forall g \in G, x \in \tilde G. \]
We define multiplication in $C(\tilde G,G,A)$ by
\[ (\varphi \psi)(y) = \sum_{xG \in \tilde G/G} \varphi(x) \alpha_x[\psi(x^{-1}y)] \quad \forall \ \varphi,\psi \in C(\tilde G,G,A). \]
Involution is given by
\[ \varphi^*(x) = \alpha_x[\varphi(x^{-1})^*] \quad \forall \ \varphi \in C(\tilde G,G,A). \]
A covariant representation $(U,\pi)$ of $(\tilde G,A)$ that preserves the twisting map determines a $\ast$-representation of $C(\tilde G,G,A)$ as follows:
\[ (U \ltimes \pi)(\varphi) = \sum_{xG \in \tilde G/G} \pi[\varphi(x)]U_x \ dx \quad \forall \ \varphi \in C(\tilde G,G,A). \]
To obtain the twisted crossed product $(\tilde G,G) \ltimes A$, we complete $C(\tilde G,G,A)$ with respect to the norm
\[ ||\varphi|| = \sup_{(U,\pi) \ \text{that preserves} \ \sigma} (U \ltimes \pi)(\varphi). \]
We can also consider the twisted crossed product $(\tilde K,K) \ltimes A$ where the action and twisting map have been restricted to the compact groups $\tilde K$ and $K$, respectively. Every covariant representation $(U,\pi)$ of $(\tilde G, A)$ that preserves the twisting map restricts to a covariant representation $(U|_{\tilde K}, \pi)$ of $(\tilde K,A)$ that preserves the twisting map. Conversely, every covariant representation $(U,\pi)$ of $(\tilde K,A)$ that preserves the twisting map can be extended to a covariant representation of $(\tilde G,A)$ that preserves the twisting map by defining
\[ U_x = U_y\pi(\sigma(g)), \quad \forall \ x=yg \in \tilde G, y \in \tilde K, g \in G. \]
This is independent of the representation of $x$ as the product $yk$ because $\tilde K \cap G = K$, whereby $\tilde G/G \cong \tilde K/K$. It thus follows that restricting function from $\tilde G$ to $\tilde K$ determines an isomorphism
\[ (\tilde G,G) \ltimes A \overset\cong\longrightarrow (\tilde K,K) \ltimes A. \]
In this sense the twisted crossed product obtained from a twisted action of $(\tilde G,G)$ depends only on the quotient $\tilde G/G$. This justifies referring to such a twisted action as a twisted action of $F$, which reduces to an ordinary action of $F$ when the twisting map is trivial.

If $\tilde G$ is isomorphic to the semidirect product $F \ltimes G$, $F = \tilde G/G$, then there is an isomorphism between $C^*(\tilde G)$ and the ordinary crossed product $F \ltimes C^*(G)$. More generally, $C^*(G)$ is isomorphic to a twisted crossed product $(\tilde G,G) \ltimes C^*(G)$. The action of $\tilde G$ on $C^*(G)$ is induced by the action $g \mapsto xgx^{-1}$ of $\tilde G$ on $G$ by conjugation, and the twisting map is induced by the action $g \mapsto g'g$ of $G$ on itself by left translation. As above, we shall restrict this twisted action to $(\tilde K,K)$, obtaining an isomorphism $(\tilde G,G) \ltimes C^*(G)$ and $(\tilde K,K) \ltimes C^*(G)$. Below we describe the isomorphism between $C^(\tilde G)$ and $(\tilde K,K) \ltimes C^*(G)$ and show that, $\tilde K/K$ being finite, this isomorphism induces an isomorphism involving the reduced $C^*$-algebras as well as the subquotients that we wish to analyze.

 \begin{lem}\label{twisted crossed} The map $\Theta$ which associates to $\varphi \in C_c(\tilde G)$ the function $\Theta(\varphi) : \tilde K \to C_c(G)$ defined by $[\Theta(\varphi)](y) : g \mapsto \varphi(gy)$, $(y,g) \in \tilde K \times G$, extends to an isomorphism
\[ \Theta : C^*_{\lambda}(\tilde G) \overset{\cong}\longrightarrow (\tilde K,K) \ltimes C^*_{\lambda}(G). \]
For each orbit $\cO \subseteq \widehat{K}$, this isomorphism induces an isomorphism of subquotients
\[ \tilde C_{\cO} \overset\cong\longrightarrow (\tilde K,K) \ltimes C_{\cO} \]
that maps the subalgebra $p_{\cO}\tilde C_{\cO}p_{\cO}$ of $\tilde C_{\cO}$ onto the subalgebra $(\tilde K,K) \ltimes p_{\cO}C_{\cO}p_{\cO}$ of $(\tilde K,K) \ltimes C_{\cO}$. 

\end{lem}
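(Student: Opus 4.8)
The plan is to prove the lemma in three stages: first show that $\Theta$ is a $\ast$-isomorphism between the dense convolution subalgebras $C_c(\tilde G)$ and $C(\tilde K,K,C_c(G))$; then that it is isometric for the reduced norm on $C^\ast_\lambda(\tilde G)$ and the twisted crossed product norm on $(\tilde K,K)\ltimes C^\ast_\lambda(G)$; and finally that it carries the multiplier $p_\cO$ to $p_\cO$, from which the statements about the ideals $\tilde A_\cO$, $\tilde B_\cO$, the subquotients, and the corners follow formally.

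For the first stage I would begin with the bookkeeping: write $\tilde G=\bigsqcup_x Gx$ over a set of coset representatives $x$ for $\tilde K/K$ (legitimate since $\tilde G=G\tilde K$ and $G\cap\tilde K=K$, so these also represent $\tilde G/G$), and record that $\tilde G$ is unimodular and that conjugation by any element of $\tilde G$ preserves Haar measure on $G$ (an automorphism of the semisimple Lie algebra $\LieG$ preserves the Killing form, hence has determinant of modulus one). Given these, checking that $\Theta(\varphi)$ satisfies the covariance relation $\Theta(\varphi)(ky)=\Theta(\varphi)(y)\sigma(k)^\ast$ — which, because $\sigma(k)$ is left translation by $k$, amounts to $\varphi((gk)y)=\varphi(g(ky))$ — and verifying multiplicativity and $\ast$-compatibility against the twisted crossed product formulas are direct substitutions in the convolution integrals. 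Bijectivity on the dense subalgebras is then immediate: $\Theta(\varphi)=0$ forces $\varphi=0$ since $\tilde G=G\tilde K$, and given $\Psi\in C(\tilde K,K,C_c(G))$ the rule $\varphi(gy):=[\Psi(y)](g)$ defines a preimage, the covariance condition on $\Psi$ being exactly what makes this independent of the factorization $z=gy$.

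The step I expect to be the main obstacle is showing that $\Theta$ extends to an \emph{isometric} isomorphism of the completions. I would argue this directly. Any covariant representation $(U,\pi)$ of $(\tilde K,C^\ast_\lambda(G))$ that preserves the twisting map assembles, using $\tilde G=G\tilde K$ and the twist condition $U_k=\pi(\sigma(k))$ for $k\in K$, into a unitary representation $\rho$ of $\tilde G$ with $\rho|_G$ the integrated form of $\pi$ and $(U\ltimes\pi)\circ\Theta=\rho$ on $C_c(\tilde G)$. Since $\pi$ factors through $C^\ast_\lambda(G)$ we have $\rho|_G\prec\lambda_G$; as $\rho$ is contained in $\mathrm{Ind}_G^{\tilde G}(\rho|_G)$ (finite index) and induction is continuous for weak containment, $\rho\prec\mathrm{Ind}_G^{\tilde G}\lambda_G=\lambda_{\tilde G}$, whence $\|(U\ltimes\pi)(\Theta(\varphi))\|\le\|\varphi\|_{C^\ast_\lambda(\tilde G)}$. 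Conversely, $\lambda_{\tilde G}$ restricts on $G$ to a finite multiple of $\lambda_G$, hence factors through $C^\ast_\lambda(G)$ and provides a covariant pair of the required type realizing the norm $\|\varphi\|_{C^\ast_\lambda(\tilde G)}$; the two inequalities give the isometry. (Since $F$ is finite, the full and reduced twisted crossed products coincide, so the target is unambiguous.)

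For the last stage, each $p_{\cO'}$ ($\cO'$ an $F$-orbit) is $\tilde K$-invariant since $\alpha_y p_\tau=p_{y\cdot\tau}$, and it commutes with every $\sigma(k)$, being a central multiplier of $C^\ast(K)$; a short computation with the convolution formulas then gives $\Theta(p_{\cO'}\varphi)=p_{\cO'}\Theta(\varphi)$ and $\Theta(\varphi p_{\cO'})=\Theta(\varphi)p_{\cO'}$, so $\Theta$ extends to the multiplier algebras fixing each $p_{\cO'}$ and hence $p_\cO$. Consequently $\Theta$ maps $C^\ast_\lambda(\tilde G)p_{\cO'}C^\ast_\lambda(\tilde G)$ onto the ideal of $(\tilde K,K)\ltimes C^\ast_\lambda(G)$ generated by $p_{\cO'}$, which is $(\tilde K,K)\ltimes\bigl(C^\ast_\lambda(G)p_{\cO'}C^\ast_\lambda(G)\bigr)$ — the $\tilde K$-invariance of $C^\ast_\lambda(G)p_{\cO'}C^\ast_\lambda(G)$ ensures the twisted action restricts to it. Summing over $\cO'\le\cO$ and over $\cO'<\cO$ identifies $\Theta(\tilde A_\cO)=(\tilde K,K)\ltimes A_\cO$ and $\Theta(\tilde B_\cO)=(\tilde K,K)\ltimes B_\cO$; exactness of the twisted crossed product functor (Green's work, trivial here as $F$ is finite) then yields the induced isomorphism $\tilde C_\cO\cong(\tilde K,K)\ltimes C_\cO$. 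Under it $p_\cO$ still goes to $p_\cO$, and since $p_\cO$ is $\tilde K$-invariant and commutes with the twisting map we get $p_\cO\bigl[(\tilde K,K)\ltimes C_\cO\bigr]p_\cO=(\tilde K,K)\ltimes(p_\cO C_\cO p_\cO)$, which is the final assertion. Everything outside the norm identification is routine bookkeeping with the twisted crossed product formulas.
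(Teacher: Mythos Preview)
Your proposal is correct and complete; the route differs from the paper's mainly in how the norm identification is handled.

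The paper does not verify the $\ast$-algebra isomorphism or the reduced-norm isometry by hand. Instead it cites a general result (Proposition~7.28 of \cite{MR2288954}) to get the isomorphism $C^*(\tilde G)\cong(\tilde G,G)\ltimes C^*(G)$ at the level of \emph{full} $C^*$-algebras, then descends to the reduced algebras by observing that under this isomorphism an irreducible $\pi$ of $\tilde G$ corresponds to the covariant pair $(\pi,\pi|_G)$, and that $\pi\in\widehat{\tilde G}_\lambda$ if and only if every irreducible constituent of $\pi|_G$ lies in $\widehat G_\lambda$; hence the kernel of $\lambda_{\tilde G}$ maps onto $(\tilde G,G)\ltimes\ker\lambda_G$. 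Your argument instead establishes the dense-level $\ast$-isomorphism directly and then proves the reduced-norm equality by a weak-containment argument (induction in stages and $\mathrm{Ind}_G^{\tilde G}\lambda_G=\lambda_{\tilde G}$). The paper's approach is quicker on the page because it outsources the algebraic verification; yours is more self-contained and makes the reduced step transparent without ever passing through the full crossed product. Your third stage---$\tilde K$-invariance of $p_{\cO}$, compatibility with the twisting map, and the resulting identification of ideals and corners---is essentially the same as the paper's, which computes $[\Theta(\psi\varphi)](y)=(y^{-1}\cdot\psi)[\Theta(\varphi)(y)]$ and then uses $y\cdot p_{\cO}=p_{\cO}$.
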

\begin{proof}
The fact that $\Theta$ induces an isomorphism between $C^*(\tilde G)$ and $(\tilde G,G) \ltimes C^*(G)$ is a special case of \cite[Proposition 7.28]{MR2288954}. As established there, an irreducible unitary representation $\pi$ of $G$ corresponds under this isomorphism to the covariant representation $(\pi, \pi_G)$. Recall that the restriction to $G$ of any irreducible unitary representation $\pi$ of $\tilde G$ decomposes into a finite direct sum of irreducible representations of $G$, and $\pi$ belongs to $\widehat{\tilde G}_{\lambda}$ if and only if each of these irreducible representation of $G$ belong to $\widehat{G}_{\lambda}$. It follows that the isomorphism between $C^*(\tilde G)$ and $(\tilde G,G) \ltimes C^*(G)$ maps the kernel of the regular representation of $\tilde G$ onto the ideal in $(\tilde G,G) \ltimes C^*(G)$ that can be identified with $(\tilde G,G) \ltimes \ker \lambda$, $\lambda$ denoting the left regular representation of $G$. Thus $\Theta$ induces an isomorphism between $C^*_{\lambda}(\tilde G)$ and $[(\tilde G,G) \ltimes C^*(G)]/[(\tilde G,G) \ltimes \ker \lambda] \cong (\tilde G,G) \ltimes C^*_{\lambda}(G)$. As mentioned, that $\Theta$ defines an isomorphism with $(\tilde K,K) \ltimes C^*_{\lambda}(G)$ follows from the fact that $\tilde G/G \cong \tilde K/K$.
 
We observe that for all $\psi \in C(K)$ and $\varphi \in C_c(\tilde G)$, the invariance of Haar measure on $K$ with respect to the action of $\tilde K$ implies that
\[ [\Theta(\psi \varphi)](y) = (y^{-1} \cdot \psi)[\Theta(\varphi)(y)] \]
for all $y \in \tilde K$. Since $p_{\cO}$ is fixed by the action of $\tilde K$, we find that $\Theta$ maps the ideal $C^*_{\lambda}(\tilde G)p_{\cO}C^*_{\lambda}(\tilde G)$ onto the ideal $(\tilde K,K) \ltimes C^*_{\lambda}(G)p_{\cO}C^*_{\lambda}(G)$. Thus $\Theta$ maps $\tilde A_{\cO}$ onto $(\tilde K,K) \ltimes A_{\cO}$, inducing an isomorphism from $\tilde C_{\cO}$ to $(\tilde K,K) \ltimes C_{\cO}$. In the same way we find that $\Theta$ maps
$p_{\cO}\tilde C_{\cO}p_{\cO}$ onto $(\tilde K,K) \ltimes p_{\cO}C_{\cO}p_{\cO}$.
\end{proof}

When $\tau$ is the minimal $K$-type of $\pi$, $\tau$ has multiplicity one in $\pi$, which allows us to make the following definition.

\begin{defn}\label{transform} For each equivalence class $\tau \in \widehat{K}$, let us choose a finite-dimensional Hilbert space $V_{\tau}$ that represents $\tau$. Denote by $\text{End}(V_{\tau})$ the $C^*$-algebra of endomorphisms of $V_{\tau}$, i.e. linear maps $V_{\tau} \to V_{\tau}$.
Denote by $X_{\tau}$ the locally closed subset of $\widehat{G}_{\lambda}$ consisting of representations whose minimal $K$-type is $\tau$.
Given $\varphi \in C_c(G)$, define a function $\widehat{\varphi}_{\tau} : X_{\tau} \to \text{End}(V_{\tau})$
so that we obtain a commutative diagram
\[ \xymatrix{V_{\tau} \ar[d]_{U_{\pi}}\ar[r]^{\widehat{\varphi}_{\tau}(\pi)} & V_{\tau} \\
H_{\pi}^{\tau} \ar[r]^{\pi(p_{\tau}\varphi p_{\tau})} & H_{\pi}^{\tau} \ar[u]_{U_{\pi}^*}} \]
in which $U_{\pi}: V_{\tau} \to H_{\pi}^{\tau}$ is any unitary operator intertwining $\tau$ with the restriction of $\pi$ to $K$.
\end{defn}

\begin{prop}\label{direct sum}
Let $\cO$ be an orbit in $\widehat{K}$ under the action of $F$. Then the map $\varphi \mapsto \sum_{\tau \in \cO} \widehat{\varphi}_{\tau}$ establishes an isomorphism
\[ p_{\cO}C_{\cO}p_{\cO} \overset\cong\longrightarrow \bigoplus_{\tau \in \cO} C_0(X_{\tau}, \text{End}(V_{\tau})). \]
\end{prop}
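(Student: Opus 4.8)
The plan is to pass from the $F$-orbit $\cO$ down to the individual weights $\tau\in\cO$ treated by Higson, in three moves: split the corner $p_{\cO}C_{\cO}p_{\cO}$ along the orbit, identify each piece with $C_{0}(X_{\tau},\text{End}(V_{\tau}))$ via the transform of Definition~\ref{transform}, and reassemble. By Proposition~\ref{spectra} the spectrum of $C_{\cO}$ is the disjoint union $\bigsqcup_{\tau\in\cO}X_{\tau}$; recall also that the minimal $K$-type of any $\rho\in\widehat{G}_{\lambda}$ occurs with multiplicity one and is dominated by every $K$-type of $\rho$, and that distinct members of $\cO$ are incomparable in $\widehat{K}$ (the $F$-orbit is an antichain, since the $\tilde{K}$-orbits in $\widehat{K}$ are finite). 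From these facts, for $\rho\in X_{\tau_{0}}$ and $\tau\in\cO$ the projection $\rho(p_{\tau})$ onto $H_{\rho}^{\tau}$ is nonzero exactly when $\tau=\tau_{0}$, in which case it has rank $\dim\tau$.

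For the splitting, take $\tau\neq\tau'$ in $\cO$ and $a\in C_{\cO}$; then $p_{\tau}ap_{\tau'}$ again lies in $C_{\cO}$, and for every irreducible representation $\rho$ of $C_{\cO}$ one of the outer factors $\rho(p_{\tau})$, $\rho(p_{\tau'})$ vanishes by the remark above, so $p_{\tau}C_{\cO}p_{\tau'}=0$. Since the $p_{\tau}$, $\tau\in\cO$, are mutually orthogonal projections summing to $p_{\cO}$, this yields the internal direct sum decomposition
\[ p_{\cO}C_{\cO}p_{\cO}=\bigoplus_{\tau\in\cO}p_{\tau}C_{\cO}p_{\tau}. \]
I would then observe that $\widehat{\varphi}_{\tau}$ depends on $\varphi\in C_{c}(G)$ only through the image of $p_{\tau}\varphi p_{\tau}$ in $p_{\tau}C_{\cO}p_{\tau}$, that the formula $\widehat{a}_{\tau}(\rho)=U_{\rho}^{*}\rho(a)U_{\rho}$ extends Definition~\ref{transform} to all of $p_{\tau}C_{\cO}p_{\tau}$ (it is well defined since the multiplicity-one property makes the intertwiner $U_{\rho}\colon V_{\tau}\to H_{\rho}^{\tau}$ unique up to a unimodular scalar, which the two-sided multiplication cancels), and that, the sets $X_{\tau}$ being disjoint, $\varphi\mapsto\sum_{\tau\in\cO}\widehat{\varphi}_{\tau}$ is precisely the composite of $\varphi\mapsto(p_{\tau}\varphi p_{\tau})_{\tau}\in\bigoplus_{\tau}p_{\tau}C_{\cO}p_{\tau}$ with the maps $a\mapsto\widehat{a}_{\tau}$. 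So everything reduces to showing that $a\mapsto\widehat{a}_{\tau}$ is an isomorphism $p_{\tau}C_{\cO}p_{\tau}\overset{\cong}\longrightarrow C_{0}(X_{\tau},\text{End}(V_{\tau}))$ for each fixed $\tau\in\cO$.

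To prove that, I would note that $p_{\tau}C_{\cO}p_{\tau}$ is a full corner of the ideal of $C_{\cO}$ generated by $p_{\tau}$, whose spectrum is $X_{\tau}$ by the rank computation; an irreducible representation of this corner acts on $H_{\rho}^{\tau}$, of dimension $\dim\tau$, as all of $\text{End}(H_{\rho}^{\tau})$, because $\rho$ is irreducible on $H_{\rho}$ and $H_{\rho}^{\tau}$ is finite-dimensional. Hence $p_{\tau}C_{\cO}p_{\tau}$ is a homogeneous continuous-trace $C^{*}$-algebra over the locally closed, hence locally compact Hausdorff, subset $X_{\tau}$ of $\widehat{G}_{\lambda}$ (Hausdorff via the homeomorphism $\widehat{G}_{\lambda}\cong(\widehat{M}\times\widehat{A})/W$). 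Its structure bundle is trivialized by the conjugations $T\mapsto U_{\rho}TU_{\rho}^{*}$ — these ignore the scalar ambiguity in $U_{\rho}$, and $U_{\rho}$ can be chosen continuously near each point since $\rho\mapsto H_{\rho}^{\tau}$ is a continuous field of Hilbert spaces of constant dimension $\dim\tau$ — so $p_{\tau}C_{\cO}p_{\tau}\cong C_{0}(X_{\tau},\text{End}(V_{\tau}))$, and unwinding Definition~\ref{transform} identifies this isomorphism with $a\mapsto\widehat{a}_{\tau}$. Alternatively one may simply cite Higson here: $p_{\tau}C_{\cO}p_{\tau}$ is the $p_{\tau}$-corner of the direct summand of $C_{\cO}$ with spectrum $X_{\tau}$, which is isomorphic to Higson's subquotient $C_{\tau}$, and the proof of \cite[Proposition 6.10]{MR2391803} (Theorem~\ref{higson morita}) exhibits the $p_{\tau}$-corner of $C_{\tau}$ as $C_{0}(X_{\tau},\text{End}(V_{\tau}))$ via $\widehat{\,\cdot\,}_{\tau}$.

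The main obstacle is the vanishing $p_{\tau}C_{\cO}p_{\tau'}=0$ for distinct members of the orbit — the genuinely new ingredient — which amounts to the fact that no irreducible tempered representation can carry two distinct members of a single $F$-orbit simultaneously among its $K$-types, and this rests squarely on the antichain property of $F$-orbits in $\widehat{K}$. The only other delicate point, the continuity of $\widehat{\varphi}_{\tau}$ and its vanishing at infinity (equivalently the local triviality of the field $\rho\mapsto H_{\rho}^{\tau}$), is exactly what Higson handles in the connected case and is carried out one $\tau$ at a time, so it is untouched by the passage to an $F$-orbit.
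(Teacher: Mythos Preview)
Your proof is correct and shares the paper's two-step architecture: first split $p_{\cO}C_{\cO}p_{\cO}=\bigoplus_{\tau\in\cO}p_{\tau}C_{\cO}p_{\tau}$ by showing $p_{\tau}C_{\cO}p_{\tau'}=0$ for $\tau\neq\tau'$, then identify each summand with $C_{0}(X_{\tau},\text{End}(V_{\tau}))$. The splitting step is identical; your emphasis on the antichain property of $F$-orbits in $\widehat{K}$ actually makes explicit what the paper's one-line justification (``$\tau$ and $\tau'$ cannot both be the minimal $K$-type of $\pi$'') leaves implicit --- namely that for $\pi$ in the spectrum of $C_{\cO}$, no member of $\cO$ other than the minimal $K$-type can occur at all.

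Where you diverge is in the identification of each $p_{\tau}C_{\cO}p_{\tau}$. The paper argues algebraically: it embeds $\text{End}(V_{\tau})\hookrightarrow M(p_{\tau}C_{\cO}p_{\tau})$ via the Peter--Weyl decomposition of $C^{*}(K)$, identifies the commutant of this copy of $\text{End}(V_{\tau})$ as the $K$-fixed subalgebra $p_{\tau}C_{\cO}^{K}p_{\tau}$, obtains a tensor factorization $p_{\tau}C_{\cO}^{K}p_{\tau}\otimes\text{End}(V_{\tau})\cong p_{\tau}C_{\cO}p_{\tau}$, and then shows $p_{\tau}C_{\cO}^{K}p_{\tau}$ is commutative with spectrum $X_{\tau}$ via a rank-one projection computation. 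You instead invoke the structure theory of $n$-homogeneous $C^{*}$-algebras with Hausdorff spectrum and trivialize the matrix bundle using the scalar-independent conjugations $T\mapsto U_{\rho}TU_{\rho}^{*}$ (or simply cite Higson's Proposition~6.10). Your route is shorter if one imports Fell's theorem; the paper's route is self-contained and singles out the commutative subalgebra $p_{\tau}C_{\cO}^{K}p_{\tau}\cong C_{0}(X_{\tau})$ explicitly, which mirrors how Higson organizes the connected case. One small caution: your summary sentence ``no irreducible tempered representation can carry two distinct members of a single $F$-orbit among its $K$-types'' is stated too broadly --- it is only needed, and only argued, for representations whose minimal $K$-type already lies in $\cO$, which is exactly the spectrum of $C_{\cO}$.
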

\begin{proof}
Let $\tau,\tau' \in \widehat{K}$, $\tau \neq \tau'$. Let $\varphi \in C^*_{\lambda}(G)$ and let $\pi$ be an irreducible representation of $p_{\cO}C_{\cO}p_{\cO}$. Since $\tau$ and $\tau'$ cannot both be the minimal $K$-type of $\pi$, we have that $\pi(p_{\tau}\varphi p_{\tau'})=0$. Thus $p_{\tau}\varphi p_{\tau'}=0$ in $p_{\cO}C_{\cO}p_{\cO}$. Since $p_{\cO} = \sum_{\tau \in \cO} p_{\tau}$ is the unit of $p_{\cO}C_{\cO}p_{\cO}$, we find that $p_{\cO}C_{\cO}p_{\cO} = \oplus_{\tau \in \cO} p_{\tau}C_{\cO}p_{\tau}$. Using the notation $A^K$ to mean the subalgebra of $A$ consisting of all elements of $A$ fixed by each element of $K$, consider
\[ [p_{\tau}C_{\cO}p_{\tau}]^K = p_{\tau}C_{\cO}^Kp_{\tau}. \]
According to the Peter-Weyl Theorem, the group $C^*$-algebra $C^*(K)$ is isomorphic to the direct sum of the matrix algebras $\text{End}(V_{\tau})$. The aforementioned $\ast$-homomorphism $C^*(K) \to M(C^*_{\lambda}(G))$ then allows us to consider the sequence of maps
\[ \textmd{End}(V_{\tau}) \to C^*(K) \to M(C^*_{\lambda}(G)) \to M(p_{\tau}C_{\cO}p_{\tau}). \]
In this way we define a $\ast$-homomorphism  define a $\ast$-homomorphism $\text{End}(V_{\tau}) \to M(p_{\tau}C_{\cO}p_{\tau})$, which is observed to be injective. Thus we way consider $\text{End}(V_{\tau})$ to be a subalgebra of the multiplier algebra of $p_{\tau}C_{\cO}p_{\tau}$. We then find that the commutator of $\textmd{End}(V_{\tau})$ in $M(p_{\tau}C_{\cO}p_{\tau})$ is precisely $p_{\tau}C_{\cO}^Kp_{\tau}$.
It follows that multiplication induces an isomorphism
\[ p_{\tau}C_{\cO}^Kp_{\tau} \otimes \textmd{End}(V_{\tau}) \overset\cong\longrightarrow p_{\tau}C_{\cO}p_{\tau}. \]
If $q \in \textmd{End}(V_{\tau})$ is any rank one projection, which we identify with a multiplier of $p_{\tau}C_{\cO}p_{\tau}$ and $\pi$ is an irreducible representation of $p_{\tau}C_{\cO}p_{\tau}$, the subspace $\pi(q)H_{\pi}$ of $H_{\pi}$ is invariant under $p_{\tau}C_{\cO}^Kp_{\tau}$, and every irreducible representation of $p_{\tau}C_{\cO}^Kp_{\tau}$ is of this form. Since $\tau$ occurs with multiplicity one in $\pi$, we find that the projection $\pi(p)$ also has rank one, its range coinciding with the image of $T(V_{\tau})$ under all intertwining operators from $V_{\tau}$ to $H_{\pi}$. Thus every irreducible representation of $p_{\tau}C_{\cO}^Kp_{\tau}$ is one-dimensional, and so $p_{\tau}C_{\cO}^Kp_{\tau}$ must be commutative. The spectrum of $p_{\tau}C_{\cO}^Kp_{\tau}$ can be identified with the locally closed subset $X_{\tau}$ of $\widehat{G}_{\lambda}$, so that we obtain an isomorphism
\[ p_{\tau}C_{\cO}p_{\tau} \overset\cong\longrightarrow C_0(X_{\tau}) \otimes \textmd{End}(V_{\tau}) = C_0(X_{\tau}, \textmd{End}(V_{\tau})). \]
One can check that the isomorphism
\[ p_{\cO}C_{\cO}p_{\cO} = \bigoplus_{\tau \in \cO} p_{\tau}C_{\cO}p_{\tau} \overset\simeq\longrightarrow  \bigoplus_{\tau \in \cO} C_0(X_{\tau}, \textmd{End}(V_{\tau})) \]
thus obtained coincides with the asserted formula.
\end{proof}

Recall the twisted action of $(\tilde K,K)$ on $p_{\cO}C_{\cO}p_{\cO}$. We now describe the corresponding twisted action on the direct sum in Proposition~\ref{direct sum}.

\begin{defn}\label{twisted action}
Let $y \in \tilde{K}$ and $\tau \in \widehat{K}$. Denote by $\tau'$ the equivalence class of the representation $y \cdot \tau$. For each $T \in \text{End}(V_{\tau})$, define $y \cdot T \in \text{End}(V_{\tau'})$ using the commutative diagram
\[ \xymatrix{ V_{\tau'} \ar[d]_{U_y}\ar[r]^{y \cdot T} & V_{\tau'} \\
V_{\tau} \ar[r]^T & V_{\tau} \ar[u]_{U_y^*} } \]
in which $U_y$ is any unitary operator intertwining $y \cdot \tau$ with $\tau'$.
Next, let $f \in \oplus_{\tau \in \cO} C_0(X_{\tau}, \text{End}(V_{\tau}))$ with components $f_{\tau} \in C_0(X_{\tau}, \text{End}(V_{\tau})$. Define
\[ [\alpha_y(f)]_{\tau}(\pi) = y \cdot f_{y^{-1} \cdot \tau}(y^{-1} \cdot \pi). \]
Finally, define, for each $k \in K$, $\sigma(k)f : X_{\tau} \to \text{End}(V_{\tau}))$ by
\[ [\sigma(k)f](\pi) = \tau(k)f(\pi). \]
\end{defn}

Let us say that a homomorphism $\phi : A \to A'$ of $C^*$-algebras with twisted actions $(\alpha,\sigma)$ and $(\alpha',\sigma')$ of $(\tilde K,K)$ is \textit{$(\tilde K,K)$-equivariant} if $\phi[\alpha_y(a)] = \alpha'_y[\phi(a)]$ for all $y \in \tilde K$ and $a \in A$, and $\phi[\sigma(k)] = \sigma'(k)$ for all $k \in K$. Such a homormophism naturally induces a homomorphism $(\tilde K,K) \ltimes A \to (\tilde K,K) \ltimes A'$ of twisted crossed products, which is an isomorphism if and only if $\phi$ is an isomorphism.

\begin{prop}\label{equivariant}
The pair $(\alpha, \sigma)$ defines a twisted action of $(\tilde K,K)$ on $\bigoplus_{\tau \in \cO} C_0(X_{\tau}, \text{End}(V_{\tau}))$ such that the isomorphism in Proposition~\ref{direct sum} is $(\tilde K,K)$-equivariant. 
\end{prop}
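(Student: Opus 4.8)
The plan is to verify directly the two axioms of a twisted action for $(\alpha,\sigma)$ as given in Definition~\ref{twisted action}, and then to check that the isomorphism $\Psi:\varphi\mapsto\sum_{\tau\in\cO}\widehat\varphi_\tau$ of Proposition~\ref{direct sum} intertwines the twisted action of $(\tilde K,K)$ on $p_{\cO}C_{\cO}p_{\cO}$ (the restriction to $(\tilde K,K)$ of the conjugation/left-translation twisted action on $C^*_\lambda(G)$ from Lemma~\ref{twisted crossed}) with $(\alpha,\sigma)$. First I would record the basic structural facts that make the formulas in Definition~\ref{twisted action} well posed: the operator $y\cdot T$ does not depend on the choice of unitary $U_y$ intertwining $y\cdot\tau$ with $\tau'$, because two such differ by a scalar (irreducibility of $\tau'$), and the scalar cancels between $U_y$ and $U_y^*$; likewise the unitary $U_\pi:V_\tau\to H_\pi^\tau$ in Definition~\ref{transform} is unique up to a scalar since $\tau$ has multiplicity one in $\pi$, so $\widehat\varphi_\tau$ is well defined. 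I would also note that $\alpha_y$ shuffles the summands along the $F$-orbit $\cO$: it maps $C_0(X_{y^{-1}\cdot\tau},\mathrm{End}(V_{y^{-1}\cdot\tau}))$ into $C_0(X_\tau,\mathrm{End}(V_\tau))$, using that $y^{-1}$ carries $X_\tau$ onto $X_{y^{-1}\cdot\tau}$ by Proposition~\ref{equivariance} (equivariance of the minimal $K$-type assignment), and that $\pi\mapsto y^{-1}\cdot\pi$ is a homeomorphism of these locally closed subsets of $\widehat G_\lambda$.

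Next I would check the three required identities. (i) $y\mapsto\alpha_y$ is an action: $\alpha_{y_1}\alpha_{y_2}=\alpha_{y_1y_2}$ follows by composing the two defining diagrams for $y_1\cdot(-)$ and $y_2\cdot(-)$, using that $U_{y_1}U_{y_2}$ is an admissible choice of intertwiner for $(y_1y_2)\cdot\tau$, together with the cocycle-free composition $(y_1^{-1}\cdot\pi$ then $y_2^{-1}\cdot)$ in the argument slot. (ii) $\sigma:K\to UM(\bigoplus_\tau C_0(X_\tau,\mathrm{End}(V_\tau)))$ is a strictly continuous homomorphism with $\alpha_k(f)=\sigma(k)f\sigma(k)^*$ for $k\in K$: this is immediate from $[\sigma(k)f](\pi)=\tau(k)f(\pi)$ and the fact that for $k\in K$ the diagram defining $k\cdot T$ may be taken with $U_k=\tau(k)$ (since $k\cdot\tau$ is literally realized on $V_\tau$ by $T\mapsto\tau(k)T\tau(k)^*$), and $X_\tau$, $H_\pi^\tau$ are $K$-fixed. (iii) The covariance $\sigma(yky^{-1})=\alpha_y(\sigma(k))$ for $k\in K$, $y\in\tilde K$: expand both sides using $(yky^{-1})\cdot$ acting on $V_\tau$ and the definition of $y\cdot(-)$; this reduces to the identity $\tau'(yky^{-1})=U_y\,\tau(k)\,U_y^*$, which is exactly what $U_y$ intertwining $y\cdot\tau$ with $\tau'$ says.

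For the equivariance of $\Psi$ from Proposition~\ref{direct sum}, I would compare $\Psi(\alpha_y^{C^*}\varphi)$ with $\alpha_y(\Psi\varphi)$ componentwise, where $\alpha_y^{C^*}$ is the automorphism of $C^*_\lambda(G)$ induced by $g\mapsto ygy^{-1}$. Fix $\pi\in X_\tau$; under $\alpha_y^{C^*}$, the representation of $C^*_\lambda(G)$ attached to $\pi$ becomes $y^{-1}\cdot\pi\in X_{y^{-1}\cdot\tau}$ via the unitary $U_y^\pi$ intertwining $(y^{-1}\cdot\pi)|_G$-type data appropriately, and chasing the defining diagram of Definition~\ref{transform} for $\widehat{(\alpha_y^{C^*}\varphi)}_\tau(\pi)$ against that for $\widehat\varphi_{y^{-1}\cdot\tau}(y^{-1}\cdot\pi)$ shows they agree after conjugation by the $\tau$-vs-$(y^{-1}\cdot\tau)$ intertwiner $U_y$ — which is precisely $[\alpha_y(\Psi\varphi)]_\tau(\pi)$. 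Similarly $\Psi$ carries the twisting map on $p_{\cO}C_{\cO}p_{\cO}$ (coming from left translation by $K$, which on the $\tau$-isotypical picture acts by $\pi(p_\tau)\mapsto$ left multiplication by $\pi$ of $k$, i.e. by $\tau(k)$ after transport through $U_\pi$) to $\sigma$. I expect the main obstacle to be bookkeeping the scalar ambiguities consistently: one must fix, once and for all, the intertwiners $U_\pi$ (Definition~\ref{transform}) and $U_y$ (Definition~\ref{twisted action}) and then verify that the composite diagram chases close up on the nose rather than merely up to a scalar — the payoff being that, because every arrow is an honest unitary and the scalars cancel in each conjugation $T\mapsto U T U^*$, no genuine cocycle obstruction appears and $\Psi$ is strictly $(\tilde K,K)$-equivariant, hence induces the desired isomorphism of twisted crossed products.
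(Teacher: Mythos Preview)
Your proposal is correct and follows essentially the same approach as the paper: both verify the twisted-action axioms for $(\alpha,\sigma)$ directly from Definition~\ref{twisted action} (your items (ii) and (iii) match the paper's computations that $\alpha_k(f)=\sigma(k)f\sigma(k)^*$ and that $\alpha_y$ carries $\sigma(k)$ to $\sigma$ of the conjugate), and then establish equivariance of the isomorphism by computing $\widehat{(y\cdot\varphi)}_\tau(\pi)$ and $\widehat{(\gamma(k)\varphi)}_\tau(\pi)$ via the defining diagram of Definition~\ref{transform}. Your additional remarks on well-definedness of $y\cdot T$ and on verifying that $y\mapsto\alpha_y$ is an action are points the paper leaves implicit, but the core argument is the same diagram chase.
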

\begin{proof} It is straightforward to see that we have a strictly continuous homomorphism
\[ \sigma: K \to UM(\bigoplus_{\tau \in \cO} C_0(X_{\tau}, \textmd{End}(V_{\tau}))) \]
from $K$ into the unitary group in the multiplier algebra of the direct sum.
Observe that for $k \in K$, $\tau(k)$ intertwines $\tau$ with $k \cdot \tau$. Thus for $f \in C_0(X_{\tau}, \text{End}(V_{\tau}))$ and $\pi \in X_{\tau}$, we have
\[ [\alpha_k(f)](\pi) = k \cdot f(k^{-1} \cdot \pi) = k \cdot f(\pi) = \tau(k)f(\pi)\tau(k)^* = [\sigma(k)f\sigma(k)^*](\pi). \]
Thus $\alpha_k(f) = \sigma(k)f\sigma(k)^*$. With $y \in \tilde K$, we have
\[ [\sigma(yky^{-1})f](\pi) = \tau(yky^{-1})f(\pi) = [yky^{-1} \cdot f(\pi)]\tau(yky^{-1}) = \alpha_{yky^{-1}}(yky^{-1} \cdot \pi)\tau(yky^{-1}). \]

\[ [\alpha_y(\sigma(k))](f) = \alpha_y(\sigma(k) \alpha_y^{-1}(f) ) \]

\[ [\alpha_y(\sigma(k))](f)(\pi) = y \cdot \tau(k)[\alpha_y^{-1}(f)(y^{-1} \cdot \pi)] = \tau(y^{-1}ky) y \cdot[\alpha_y^{-1}(f)(y^{-1} \cdot \pi)] = \sigma(y^{-1}ky)f(\pi). \]
Thus $\alpha_y(\sigma(k)) = \sigma(y^{-1}ky)$, and so $(\alpha,\sigma)$ indeed defines a twisted action of $(\tilde K,K)$.

To see that the isomorphism in Proposition~\ref{direct sum} is $(\tilde K,K)$-equivariant,
\[ \widehat{(y \cdot \varphi)}_{\tau}(\pi) = U_{\pi}^*\pi(p_{\tau} (y \cdot \varphi) p_{\tau})U_{\pi} = U_{\pi}^*\pi(y \cdot (p_{y^{-1} \cdot \tau}\varphi p_{y^{-1} \cdot \tau})U_{\pi} = U_{\pi}^*(y^{-1} \cdot \pi)(p_{y^{-1} \cdot \tau}\varphi p_{y^{-1} \cdot \tau})U_{\pi}. \]
Thus
\[ y^{-1} \cdot \widehat{(y \cdot \varphi)}_{\tau}(\pi) = (U_{\pi}U_{y^{-1}})^*(y^{-1} \cdot \pi)(p_{y^{-1} \cdot \tau}\varphi p_{y^{-1} \cdot \tau})U_{\pi}U_{y^{-1}}. \]
The unitary operator
\[ U_{\pi}U_{y^{-1}}: V_{y^{-1} \cdot \tau} \longrightarrow H_{\pi}^{\tau} = H_{y^{-1} \cdot \pi}^{y^{-1} \cdot \tau} \]
intertwines $y^{-1} \cdot \tau$ with $(y^{-1} \cdot \pi)|_K$, and so
\[ y^{-1} \cdot \widehat{(y \cdot \varphi)}_{\tau}(\pi) = \widehat{\varphi}_{y^{-1} \cdot \tau}(y^{-1} \cdot \pi). \]
Thus
\[ \widehat{(y \cdot \varphi)}_{\tau}(\pi) = y \cdot \widehat{\varphi}_{y^{-1} \cdot \tau}(y^{-1} \cdot \pi) = [\alpha_y(\widehat{\varphi})]_{\tau}(\pi), \]
and so
\[ \widehat{ y \cdot \varphi } = \alpha_y(\widehat{\varphi}). \]
Finally, $\gamma: K \to UM(C^*_{\lambda}(G))$ denoting as before the twisting map for the twisted action of $(\tilde K,K)$ on $C^*_{\lambda}(G)$, we have
\[ \widehat{(\gamma(k)\varphi)}_{\tau}(\pi) = U_{\pi}^*\pi(p_{\tau}\gamma(k)\varphi p_{\tau})U_{\pi} = U_{\pi}^*\pi(\gamma(k) p_{\tau}\varphi p_{\tau})U_{\pi}  \]
since $\gamma(k)$ commutes with $p_{\tau}$. Since $\pi(\gamma(k)) = \pi(k)$, we have
\[ \widehat{(\gamma(k)\varphi)}_{\tau}(\pi) = U_{\pi}^*\pi(k) \pi(p_{\tau}\varphi p_{\tau})U_{\pi} = \tau(k)U_{\pi}^*\pi(p_{\tau}\varphi p_{\tau})U_{\pi} = \sigma(k)\widehat{\varphi}_{\tau}(\pi). \]
Thus $\widehat{\gamma(k)} = \sigma(k)$ so that $\varphi \mapsto \widehat{\varphi}$ is $(\tilde K,K)$-equivariant.

\end{proof}

In the same way that we defined the subquotient $\tilde C_{\cO}$ of $C^*_{\lambda}(\tilde G)$, we can define a subquotient $\tilde C_{\cO}^0$ of $C^*(\tilde G_0)$. Using the fact that the bijection $\widehat{G}_{\lambda} \overset\cong\rightarrow \widehat{G_0}$ is $\tilde K$-equivariant and restricts to homeomorphisms $X_{\tau} \overset\cong\rightarrow X_{\tau}^0$, we then obtain the following extension of Theorem~\ref{higson morita}.

\begin{thm}\label{extended morita}
For every orbit $\cO \subseteq \widehat{K}$, the subquotients $\tilde C_{\cO}$ and $\tilde C_{\cO}^0$ are each Morita equivalent to the twisted crossed product
\[ (\tilde K, K) \ltimes_{\alpha,\sigma} \bigoplus_{\tau \in \cO} C_0(X_{\tau},\text{End}(V_{\tau})). \]
\end{thm}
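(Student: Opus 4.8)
The plan is to obtain the stated Morita equivalence by composing the identifications already established, handling $\tilde C_{\cO}$ first and then explaining the modifications needed for $\tilde C_{\cO}^0$. For $\tilde C_{\cO}$, I would begin from the observation recorded after Proposition~\ref{spectra}: since $p_{\cO}$, regarded in $M(\tilde C_{\cO})$, generates $\tilde C_{\cO}$ as an ideal, $\tilde C_{\cO}$ is Morita equivalent to the corner $p_{\cO}\tilde C_{\cO}p_{\cO}$. By Lemma~\ref{twisted crossed}, the isomorphism $\Theta$ carries this corner onto the twisted crossed product $(\tilde K,K)\ltimes p_{\cO}C_{\cO}p_{\cO}$. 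Propositions~\ref{direct sum} and~\ref{equivariant} supply a $(\tilde K,K)$-equivariant isomorphism $p_{\cO}C_{\cO}p_{\cO}\cong\bigoplus_{\tau\in\cO}C_0(X_{\tau},\text{End}(V_{\tau}))$, the target carrying the twisted action $(\alpha,\sigma)$ of Definition~\ref{twisted action}. By the functoriality of the twisted crossed product construction noted just before Proposition~\ref{equivariant}, this equivariant isomorphism induces an isomorphism $(\tilde K,K)\ltimes p_{\cO}C_{\cO}p_{\cO}\cong(\tilde K,K)\ltimes_{\alpha,\sigma}\bigoplus_{\tau\in\cO}C_0(X_{\tau},\text{End}(V_{\tau}))$. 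Chaining these gives the Morita equivalence of $\tilde C_{\cO}$ with the asserted twisted crossed product.

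For $\tilde C_{\cO}^0$, I would run the identical chain with $G_0$ and $\tilde G_0$ in place of $G$ and $\tilde G$: Proposition~\ref{spectra}, Lemma~\ref{twisted crossed}, Proposition~\ref{direct sum} and Proposition~\ref{equivariant} were all formulated so as to hold verbatim in this setting, so $\tilde C_{\cO}^0$ is Morita equivalent to $(\tilde K,K)\ltimes_{\alpha,\sigma}\bigoplus_{\tau\in\cO}C_0(X_{\tau}^0,\text{End}(V_{\tau}))$, where $X_{\tau}^0\subseteq\widehat{G_0}$ is the locally closed set of irreducible representations with minimal $K$-type $\tau$. It then remains to identify this algebra with the one in the statement. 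Higson's bijection $\widehat{G}_{\lambda}\cong\widehat{G_0}$ restricts to homeomorphisms $X_{\tau}\cong X_{\tau}^0$ and, by Proposition~\ref{equivariance}, is $F$-equivariant and hence $\tilde K$-equivariant; since the twisted actions $(\alpha,\sigma)$ in both cases are defined using only the $\tilde K$-action on the orbit $\cO\subseteq\widehat K$ and on the spaces $X_{\tau}$, $X_{\tau}^0$, the induced isomorphism $\bigoplus_{\tau\in\cO}C_0(X_{\tau},\text{End}(V_{\tau}))\cong\bigoplus_{\tau\in\cO}C_0(X_{\tau}^0,\text{End}(V_{\tau}))$ is $(\tilde K,K)$-equivariant, and a final application of functoriality of twisted crossed products closes the argument.

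The only step I expect to require genuine care is the last one: verifying that Higson's bijection transports the twisted action $(\alpha,\sigma)$ on $\bigoplus_{\tau\in\cO}C_0(X_{\tau},\text{End}(V_{\tau}))$ precisely to the analogously defined action built from the $X_{\tau}^0$. This comes down to the $\tilde K$-equivariance of $\Phi$ and its compatibility with minimal $K$-types, both contained in Proposition~\ref{equivariance}, together with the fact that the twisting maps agree literally, being given on fibers over the same orbit $\cO$ by the same formula $[\sigma(k)f](\pi)=\tau(k)f(\pi)$. Everything else in the proof is a mechanical composition of isomorphisms and Morita equivalences already in hand.
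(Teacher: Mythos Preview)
Your proposal is correct and follows exactly the route the paper takes: the paper's proof is in fact just a \qed after the sentence preceding the theorem, which points to the $\tilde K$-equivariance of Higson's bijection and the homeomorphisms $X_{\tau}\cong X_{\tau}^0$ as the only ingredients beyond Lemma~\ref{twisted crossed} and Propositions~\ref{direct sum} and~\ref{equivariant}. Your write-up simply makes the chain of Morita equivalences and isomorphisms explicit, including the care you flag at the end about transporting the twisted action, which is precisely what the paper's one-line justification is asserting.
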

\qed

\section{Analysis of the Continuous Field}

In the final section we shall extend Higson's analysis of the continuous field $\{ C^*_{\lambda}(G) \ | \ t \in [0,1] \}$ for a complex connected semisimple group $G$ to finite extensions of $G$. In doing so, we will have verified the Baum-Connes conjecture with trivial coefficients for such almost connected groups.

We form a family of Lie groups $\{ \tilde G_t \ | \ t \in [0,1] \}$ in which $\tilde G_t = \tilde G$ for $t > 0$ and form the
disjoint union
\[ \mathcal{\tilde G} = \bigsqcup_{t \in [0,1]} \tilde G_t. \]
We define $\mathcal G$ similarly by replacing $\tilde G$ with $G$. As mentioned, these sets have a topological structure, indeed the structure of a smooth manifold with boundary \cite[Section 6.2]{MR2391803}. Following Higson's approach, we aim to establish a version of Theorem~\ref{extended morita} that applies to the continuous field $\{C^*_{\lambda}(\tilde G_t) \ | \ t \in [0,1] \}$.
We denote by $C^*_{\lambda}(\mathcal {\tilde G})$ and $C^*_{\lambda}(\mathcal G)$, respectively, the $C^*$-algebras of continuous sections of the continuous fields for $\tilde G$ and $G$. Each irreducible representation of $C^*_{\lambda}(\mathcal{\tilde G})$ is obtained by composing an irreducible representation of one of the fibers $C^*_{\lambda}(\tilde G_t)$ with the homomorphism $C^*_{\lambda}(\mathcal{\tilde G}) \to C^*_{\lambda}(\tilde G_t)$ given by evaluating sections at $t \in [0,1]$. In this way we can identify the spectra of this $C^*$-algebra with the disjoint union
\[ \bigsqcup_{t \in [0,1]} \widehat{(\tilde G_t)}_{\lambda}. \]

\begin{defn}
For each orbit $\cO$ in $\widehat{K}$ under the action of $F$, let $p_{\cO}$ denote the multiplier of $\mathcal{\tilde C}$ defined fiber-wise using the multipliers $p_{\cO} = \sum_{\tau \in \cO} p_{\tau}$ from Definition~\ref{orbit multipliers}. Now define subquotients $\mathcal{\tilde C}_{\cO}$ and $\mathcal{C}$ of $C^*_{\lambda}(\mathcal{\tilde G})$ and $C^*_{\lambda}(\mathcal G)$, respectively, 
 in the same way that we defined subquotients $\tilde C_{\cO}$ and $C_{\cO}$ of $C^*_{\lambda}(\tilde G)$ and $C^*_{\lambda}(G)$.
\end{defn}

The spectrum of $\mathcal{\tilde C}_{\cO}$ can be identified as a set with
\[ \{ \pi \in \widehat{(\tilde G_t)}_{\lambda} \ | \ t \in [0,1], \text{the minimal $K$-type of $\pi$ belongs to $\cO$} \}. \]

For each $t \in [0,1]$, we have the isomorphism $\Theta_t : C^*_{\lambda}(\tilde G_t) \to (\tilde K,K) \ltimes C^*_{\lambda}(G_t)$ from Lemma~\ref{twisted crossed}. Thus we can replace the continuous field $\{ C^*_{\lambda}(\tilde G_t) \ | \ t \in [0,1] \}$ with a continuous field $\{ (\tilde K,K) \ltimes C^*_{\lambda}(G_t) \ | \ t \in [0,1] \}$ whose sections are obtained using the above isomorphisms. Suppose that $\varphi$ is a continuous, compactly supported function $\tilde{\mathcal G} \to \C$. With $\varphi_t$ denoting its restriction to $\tilde G_t$, we may apply $\Theta_t$ in order to obtain a function $\Theta_t(\varphi_t) : \tilde K \to C_c(G_t)$.
Fixing $y \in \tilde K$ and letting $t \in [0,1]$ vary, we obtain a function $[\Theta(\varphi)](k) \in C_c(\mathcal G)$ defined by $[\Theta(\varphi)](k)(g) = [\Theta_t( \varphi_t)](k)(g)$ for all $g \in G_t$ and $t \in [0,1]$. Thus we obtain a function $\Theta(\varphi) : \tilde K \to C_c(\mathcal G)$. The twisted action of $(\tilde K,K)$ on $C^*_{\lambda}(G_t)$ induces a twisted action on $C^*_{\lambda}(\mathcal G)$ in a fiber-wise fashion. We thus obtain the following version of Lemma~\ref{twisted crossed} for the continuous field.

\begin{lem} The map $\Theta : C_c(\mathcal {\tilde G}) \to (\tilde K,K) \ltimes C^*_{\lambda}(\mathcal G)$ described above extends to an isomorphism
\[ C^*_{\lambda}(\mathcal{\tilde G}) \overset\cong\longrightarrow (\tilde K,K) \ltimes C^*_{\lambda}(\mathcal G). \]
For each $F$-orbit $\cO \subseteq \widehat{K}$, this isomorphism induces an isomorphism
\[ p_{\cO}\mathcal{\tilde C}_{\cO}p_{\cO} \overset\cong\longrightarrow (\tilde K,K) \ltimes p_{\cO}\mathcal Cp_{\cO} \]
that maps the subalgebra $p_{\cO}\mathcal{\tilde C}_{\cO}p_{\cO}$ of $\mathcal{\tilde C}_{\cO}$ onto the subalgebra $(\tilde K,K) \ltimes p_{\cO}\mathcal{C}_{\cO}p_{\cO}$ of $(\tilde K,K) \ltimes \mathcal{C}_{\cO}$. 

\end{lem}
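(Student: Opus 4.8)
The plan is to deduce the statement from Lemma~\ref{twisted crossed} by working one fiber at a time and checking that the pieces vary continuously in the deformation parameter $t$. The entire content will be that Lemma~\ref{twisted crossed} is natural in $t$, which is visible from the formula defining $\Theta$; the only point deserving care is the bookkeeping showing that the fiber-wise twisted crossed products form a continuous field with section algebra $(\tilde K,K)\ltimes C^*_\lambda(\mathcal G)$, and this is routine because $F$ is finite, so that $(\tilde K,K)\ltimes-$ is an algebraically finite operation introducing no analytic difficulty beyond those already present in the continuous field $\{C^*_\lambda(G_t)\ |\ t\in[0,1]\}$, which I take as given.

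To set up the target, note that $C^*_\lambda(\mathcal G)$ is the section algebra of a continuous field with fiber $C^*_\lambda(G_t)$ over $t$, hence a $C([0,1])$-algebra, and the twisted action $(\alpha,\sigma)$ of $(\tilde K,K)$ on it, being defined fiber-wise, is $C([0,1])$-linear. Consequently $(\tilde K,K)\ltimes C^*_\lambda(\mathcal G)$ is again a $C([0,1])$-algebra; since twisted crossed products are compatible with quotients by invariant ideals (the component group being finite), its fiber over $t$ is $(\tilde K,K)\ltimes C^*_\lambda(G_t)$. In particular the norm of an element of $(\tilde K,K)\ltimes C^*_\lambda(\mathcal G)$ is the supremum over $t$ of the norms of its fibers, and likewise for $C^*_\lambda(\mathcal{\tilde G})$, by a defining property of Higson's field.

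Next I would check that $\Theta$ extends to an isomorphism. By construction the fiber of $\Theta(\varphi)$ over $t$ is $\Theta_t(\varphi_t)$, with $\Theta_t$ the isomorphism of Lemma~\ref{twisted crossed}; the formula $[\Theta(\varphi)](y)(g)=\varphi(gy)$ shows at once that $\Theta(\varphi)$ is a continuous section and that $\Theta$ is a $\ast$-homomorphism from $C_c(\mathcal{\tilde G})$ into $(\tilde K,K)\ltimes C^*_\lambda(\mathcal G)$. Combining the norm description above with the fact that each $\Theta_t$ is isometric,
\[ \|\Theta(\varphi)\|=\sup_t\|\Theta_t(\varphi_t)\|=\sup_t\|\varphi_t\|=\|\varphi\|, \]
so $\Theta$ extends to an isometric $\ast$-homomorphism of completions which is still fiber-wise $\Theta_t$; and since a $C([0,1])$-algebra homomorphism that is surjective on every fiber is surjective, $\Theta$ is an isomorphism.

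The subquotient statement then goes through exactly as in the proof of Lemma~\ref{twisted crossed}. Fiber-wise $p_\cO$ is fixed by $\tilde K$, hence $\alpha$-invariant, and the identity $[\Theta(\psi\varphi)](y)=(y^{-1}\cdot\psi)[\Theta(\varphi)(y)]$ for $\psi\in C(K)$ holds in each fiber by invariance of Haar measure on $K$. Therefore $\Theta$ carries the ideal $C^*_\lambda(\mathcal{\tilde G})\,p_\cO\,C^*_\lambda(\mathcal{\tilde G})$ onto $(\tilde K,K)\ltimes C^*_\lambda(\mathcal G)\,p_\cO\,C^*_\lambda(\mathcal G)$, hence $\mathcal{\tilde A}_\cO$ onto $(\tilde K,K)\ltimes\mathcal A_\cO$ and $\mathcal{\tilde B}_\cO$ onto $(\tilde K,K)\ltimes\mathcal B_\cO$, inducing an isomorphism $\mathcal{\tilde C}_\cO\cong(\tilde K,K)\ltimes\mathcal C_\cO$; restricting to the corner cut out by $p_\cO$ yields $p_\cO\mathcal{\tilde C}_\cO p_\cO\cong(\tilde K,K)\ltimes p_\cO\mathcal C_\cO p_\cO$, as required.
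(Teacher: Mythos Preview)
Your proposal is correct and follows exactly the approach the paper intends: the paper in fact gives no proof at all (just a \qed), treating the lemma as an immediate fiber-wise consequence of Lemma~\ref{twisted crossed}, and your write-up simply makes that passage explicit with the routine $C([0,1])$-algebra bookkeeping.
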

\qed

Along the lines of Definition~\ref{transform}, we can define for each $\varphi \in C^*_{\lambda}(\mathcal G)$ a function
\[ \widehat{\varphi}_{\tau} : X_{\tau} \times [0,1] \to \text{End}(V_{\tau}) \]
using the operators $\pi( \varphi_t )$, $\pi \in X_{\tau}$ being identified with an irreducible representation of $\widehat{(G_t)}_{\lambda}$, $t \in [0,1]$, and $\varphi_t$ denoting the restriction of $\varphi$ to $G_t$. As in the proof of Proposition~\ref{direct sum}, the $C^*$-algebra $p_{\tau}\mathcal{C}^Kp_{\tau}$ is commutative, and it follows from Higson's result Theorem~\ref{higson constant} for the connected case that it is isomorphic to $C_0(X_{\tau} \times [0,1])$. Along the lines of Definition~\ref{twisted action}, we can define a twisted action of $(\tilde K, K)$ on the direct sum
\[ \bigoplus_{\tau \in \cO} C_0(X_{\tau} \times [0,1], \text{End}(V_{\tau}) )  \]
using a trivial action on $[0,1]$. 
We obtain the following versions of Propositions~\ref{direct sum} and \ref{equivariant} and Theorem~\ref{extended morita} for the continuous field.
\begin{prop}
Let $\cO$ be an orbit in $\widehat{K}$ under the action of $F$. Then the map $\varphi \mapsto \sum_{\tau \in \cO} \widehat{\varphi}_{\tau}$ establishes a $(\tilde K,K)$-equivariant isomorphism
\[ p_{\cO}\mathcal{C}_{\cO}p_{\cO} \overset\cong\longrightarrow \bigoplus_{\tau \in \cO} C_0(X_{\tau} \times [0,1], \text{End}(V_{\tau})). \]
\end{prop}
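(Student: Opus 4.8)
The plan is to run the argument of Proposition~\ref{direct sum}, and then that of Proposition~\ref{equivariant}, fiber-by-fiber over the parameter interval $[0,1]$, with Higson's Theorem~\ref{higson constant} playing the role that the bare identification of $p_{\tau}C_{\cO}^K p_{\tau}$ with $C_0(X_{\tau})$ played in the single-group case. First I would establish the direct sum decomposition exactly as before: for $\tau \neq \tau'$ in $\cO$, any $\varphi \in C^*_{\lambda}(\mathcal G)$, and any irreducible representation $\pi$ of $p_{\cO}\mathcal C_{\cO}p_{\cO}$ (which factors through some fiber $G_t$ and has a single minimal $K$-type there), one has $\pi(p_{\tau}\varphi p_{\tau'}) = 0$, so $p_{\tau}\varphi p_{\tau'} = 0$ in $p_{\cO}\mathcal C_{\cO}p_{\cO}$ and hence $p_{\cO}\mathcal C_{\cO}p_{\cO} = \bigoplus_{\tau \in \cO} p_{\tau}\mathcal C_{\cO}p_{\tau}$. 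Fixing $\tau$, the fiber-wise $\ast$-homomorphism $C^*(K) \to M(C^*_{\lambda}(\mathcal G))$ together with Peter--Weyl embeds $\text{End}(V_{\tau})$ as a subalgebra of $M(p_{\tau}\mathcal C_{\cO}p_{\tau})$ with relative commutant precisely $p_{\tau}\mathcal C_{\cO}^K p_{\tau}$, so multiplication gives an isomorphism $p_{\tau}\mathcal C_{\cO}^K p_{\tau} \otimes \text{End}(V_{\tau}) \cong p_{\tau}\mathcal C_{\cO}p_{\tau}$; and the multiplicity-one property of minimal $K$-types, valid in each fiber $G_t$, shows as before that $p_{\tau}\mathcal C_{\cO}^K p_{\tau}$ is commutative.

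The crux is then to identify $p_{\tau}\mathcal C_{\cO}^K p_{\tau}$ with $C_0(X_{\tau}\times[0,1])$. Its spectrum, via the description of the spectrum of $C^*_{\lambda}(\mathcal G)$ as $\bigsqcup_{t\in[0,1]} \widehat{(G_t)}_{\lambda}$ combined with the subquotient correspondence, is the set of pairs $(\pi,t)$ with $\pi \in \widehat{(G_t)}_{\lambda}$ having minimal $K$-type $\tau$; writing $\tau = \tau_{\sigma}$ and using Higson's homeomorphism $X_{\tau} \cong \widehat A/W_{\sigma}$, this set is $X_{\tau}\times[0,1]$. Theorem~\ref{higson constant} asserts that $\widehat A/W_{\sigma}\times[0,1]$ maps homeomorphically onto its image in the spectrum of $C^*_{\lambda}(\mathcal G)$, i.e.\ that this locus carries exactly the product topology; since $p_{\tau}\mathcal C_{\cO}^K p_{\tau}$ is a commutative $C^*$-algebra with this spectrum, it is isomorphic to $C_0(X_{\tau}\times[0,1])$, the isomorphism sending $\varphi$ to the scalar function recording the action of $\pi(\varphi_t)$ on the $\sigma$-highest-weight line. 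Tensoring with $\text{End}(V_{\tau})$ and summing over $\tau\in\cO$ yields
\[ p_{\cO}\mathcal C_{\cO}p_{\cO} \cong \bigoplus_{\tau\in\cO} C_0(X_{\tau}\times[0,1], \text{End}(V_{\tau})), \]
and one checks, exactly as in Proposition~\ref{direct sum}, that this isomorphism is implemented by $\varphi \mapsto \sum_{\tau\in\cO}\widehat\varphi_{\tau}$ with $\widehat\varphi_{\tau}$ defined via the operators $\pi(\varphi_t)$.

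For $(\tilde K,K)$-equivariance I would invoke that the twisted action of $(\tilde K,K)$ on $C^*_{\lambda}(\mathcal G)$ and the twisted action on $\bigoplus_{\tau\in\cO}C_0(X_{\tau}\times[0,1],\text{End}(V_{\tau}))$ from Definition~\ref{twisted action} are both defined fiber-wise, with $\tilde K$ acting trivially on $t$. Hence the identities $\widehat{y\cdot\varphi} = \alpha_y(\widehat\varphi)$ for $y\in\tilde K$ and $\widehat{\gamma(k)} = \sigma(k)$ for $k\in K$ proved in Proposition~\ref{equivariant} hold in each fiber verbatim, and a family of fiber-wise $(\tilde K,K)$-equivariant isomorphisms assembles to a $(\tilde K,K)$-equivariant isomorphism of the section algebras. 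The step I expect to require the most care is the one above that is genuinely new relative to the connected, single-group case: promoting the set-level identification of the spectrum of $p_{\tau}\mathcal C_{\cO}^K p_{\tau}$ with $X_{\tau}\times[0,1]$ to an identification of topologies — equivalently, checking that the $C([0,1])$-module structure inherited from the continuous field matches the product structure — which is precisely the content extracted from Theorem~\ref{higson constant}.
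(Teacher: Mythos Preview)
Your proposal is correct and follows exactly the approach the paper takes: the paper itself does not write out a proof but explicitly states that the argument proceeds ``as in the proof of Proposition~\ref{direct sum}'' to show $p_{\tau}\mathcal{C}^Kp_{\tau}$ is commutative, invokes Theorem~\ref{higson constant} to identify it with $C_0(X_{\tau}\times[0,1])$, and then says the proposition is obtained ``along the lines of'' Propositions~\ref{direct sum} and~\ref{equivariant}. Your sketch is in fact more detailed than what the paper provides, and your identification of the topology-matching step via Theorem~\ref{higson constant} as the only genuinely new ingredient is precisely the point the paper highlights.
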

\qed

\begin{thm}\label{field morita} For every $F$-orbit $\cO \subseteq \widehat{K}$, the subquotient $\mathcal{\tilde C}_{\cO}$ is Morita equivalent to the twisted crossed product
\[ (\tilde K,K) \ltimes \bigoplus_{\tau \in \cO} C_0(X_{\tau} \times [0,1], \text{End}(V_{\tau})). \]
\end{thm}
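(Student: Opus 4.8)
The plan is to imitate, essentially verbatim, the proof of Theorem~\ref{extended morita}, feeding in the continuous-field refinements of its three ingredients — the preceding Lemma (the continuous-field analogue of Lemma~\ref{twisted crossed}) and the preceding Proposition (the continuous-field analogue of Propositions~\ref{direct sum} and~\ref{equivariant}) — in place of the originals.

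First I would record that $p_{\cO}$, regarded as a multiplier projection of $\mathcal{\tilde C}_{\cO}$, is full. Every irreducible representation of $\mathcal{\tilde C}_{\cO}$ factors through evaluation at some $t \in [0,1]$ followed by an irreducible representation of the fibre $\tilde C_{\cO}$ for $t>0$, or of $\tilde C_{\cO}^{0}$ at $t=0$; by the spectral description recorded just after the definition of $\mathcal{\tilde C}_{\cO}$ — itself a fibrewise application of Proposition~\ref{spectra}, together with Theorem~\ref{higson constant} to identify the minimal-$K$-type data on the $t=0$ fibre with that on the $t>0$ fibres — such a representation has a $K$-type lying in $\cO$ and hence does not annihilate $p_{\cO}$. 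Therefore $\mathcal{\tilde C}_{\cO}p_{\cO}\mathcal{\tilde C}_{\cO}$ is dense in $\mathcal{\tilde C}_{\cO}$, the bimodule $\mathcal{\tilde C}_{\cO}p_{\cO}$ is an imprimitivity bimodule, and $\mathcal{\tilde C}_{\cO}$ is Morita equivalent to the corner $p_{\cO}\mathcal{\tilde C}_{\cO}p_{\cO}$, exactly as in the paragraph preceding Lemma~\ref{twisted crossed}. (We only need Morita equivalence of the total algebras here; no compatibility of the bimodule with the continuous-field structure is required.)

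Next I would chain the two identifications already supplied by the preceding Lemma and Proposition. The Lemma gives an isomorphism $p_{\cO}\mathcal{\tilde C}_{\cO}p_{\cO}\cong(\tilde K,K)\ltimes p_{\cO}\mathcal{C}_{\cO}p_{\cO}$, and the Proposition gives a $(\tilde K,K)$-equivariant isomorphism $p_{\cO}\mathcal{C}_{\cO}p_{\cO}\cong\bigoplus_{\tau\in\cO}C_0(X_{\tau}\times[0,1],\text{End}(V_{\tau}))$. Since a $(\tilde K,K)$-equivariant isomorphism of $C^*$-algebras with twisted actions induces an isomorphism of the associated twisted crossed products (the functoriality established just before Proposition~\ref{equivariant}), we obtain
\[ p_{\cO}\mathcal{\tilde C}_{\cO}p_{\cO}\;\cong\;(\tilde K,K)\ltimes\bigoplus_{\tau\in\cO}C_0(X_{\tau}\times[0,1],\text{End}(V_{\tau})). \]
Combining this with the Morita equivalence of the previous paragraph yields the claim.

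The only point that genuinely requires care — and the step I expect to be the main obstacle — is the fullness of $p_{\cO}$ in $\mathcal{\tilde C}_{\cO}$, i.e. making the spectral bookkeeping work uniformly over all of $[0,1]$, including the exceptional fibre at $t=0$; this is precisely where Theorem~\ref{higson constant} must be invoked. Once that is in hand the argument is formal: the continuous-field versions of Lemma~\ref{twisted crossed}, Proposition~\ref{direct sum}, and Proposition~\ref{equivariant} have absorbed all the analytic content, and Theorem~\ref{field morita} is their mechanical consequence in exactly the way Theorem~\ref{extended morita} follows from Lemma~\ref{twisted crossed} and Propositions~\ref{direct sum} and~\ref{equivariant}.
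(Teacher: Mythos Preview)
Your proposal is correct and follows essentially the same route as the paper: the paper marks this theorem with a bare \qed, treating it as the mechanical consequence of the preceding Lemma and Proposition together with the full-projection Morita equivalence argument already used before Lemma~\ref{twisted crossed}, and your write-up simply unwinds that implicit chain. Your care over the fullness of $p_{\cO}$ at $t=0$ is slightly more than the paper spells out (the paper records the spectrum of $\mathcal{\tilde C}_{\cO}$ as a set and leaves the rest to the reader), but it is exactly the right point to check and your justification is fine.
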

\qed

The continuous field $\{ C^*_{\lambda}( \mathcal{\tilde G} ) \ | t \in [0,1] \}$ is now observed to be $K$-theoretically trivial in the sense below. As mentioned, this implies that $\tilde G$ satisfies the Baum-Connes conjecture with trivial coefficients. At this point the reasoning becomes identical to that of Higson in the connected case.

\begin{thm}\label{k theory}
For every $t \in [0,1]$, the homomorphism $C^*_{\lambda}(\mathcal{\tilde G} ) \to C^*_{\lambda}(\tilde G_t)$
given by evaluation of sections at $t$ induces an isomorphism
\[ K_*(C^*_{\lambda}(\mathcal{\tilde G})) \overset\simeq\longrightarrow K_*(C^*_{\lambda}(\tilde G_t)). \]
of $K$-theory groups.
\end{thm}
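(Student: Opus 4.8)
The plan is to reduce the statement to Theorem~\ref{field morita} by the bootstrap argument Higson uses in the connected case \cite{MR2391803}. First I would set up a filtration of $C^*_{\lambda}(\mathcal{\tilde G})$. The partial order on the countable set of $F$-orbits in $\widehat{K}$ is well-founded and each orbit has only finitely many predecessors, since the dominant weights dominated by a fixed one are the lattice points of a bounded polytope; hence the orbits can be enumerated $\cO_1,\cO_2,\dots$ so that each initial segment $\{\cO_1,\dots,\cO_n\}$ is closed under passing to smaller orbits. Put $\mathcal{\tilde J}_n = \mathcal{\tilde A}_{\cO_1}+\cdots+\mathcal{\tilde A}_{\cO_n}$. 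Then the $\mathcal{\tilde J}_n$ increase, $\overline{\bigcup_n \mathcal{\tilde J}_n} = C^*_{\lambda}(\mathcal{\tilde G})$ (every irreducible representation of every fibre has a minimal $K$-type and so is non-zero on some $p_{\cO}$), and, by the spectral description in Proposition~\ref{spectra} together with the fact that the minimal $K$-type orbit of a representation is the minimum of all of its $K$-type orbits, $\mathcal{\tilde J}_n/\mathcal{\tilde J}_{n-1} \cong \mathcal{\tilde C}_{\cO_n}$ (with the convention $\mathcal{\tilde J}_0=0$). The identical construction applied fibre-wise gives ideals $(\tilde J_n)_t \subseteq C^*_{\lambda}(\tilde G_t)$, and evaluation of sections at $t$ carries $\mathcal{\tilde J}_n$ onto $(\tilde J_n)_t$, inducing on the subquotients the evaluation map $\mathcal{\tilde C}_{\cO_n} \to (\tilde C_{\cO_n})_t$.

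Next I would compute the effect of evaluation on a single subquotient. Because $p_{\cO}$ is a fibre-wise defined multiplier fixed by $\tilde K$, evaluation at $t$ carries the full corner $p_{\cO}\mathcal{\tilde C}_{\cO}p_{\cO}$ onto the full corner $p_{\cO}(\tilde C_{\cO})_t p_{\cO}$, so it suffices to treat these corners, whose $K$-theory agrees with that of the subquotients. By the $(\tilde K,K)$-equivariant isomorphism displayed just before Theorem~\ref{field morita} (combined with the field analogue of Lemma~\ref{twisted crossed} and Proposition~\ref{equivariant}), $p_{\cO}\mathcal{\tilde C}_{\cO}p_{\cO}$ is isomorphic to $(\tilde K,K)\ltimes\bigoplus_{\tau\in\cO} C_0(X_{\tau}\times[0,1],\text{End}(V_{\tau}))$ with twisted action trivial in the $[0,1]$-coordinate, and $p_{\cO}(\tilde C_{\cO})_t p_{\cO}$ to the same expression with the $[0,1]$-factor removed. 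Since the twisted action is trivial on $[0,1]$, forming the twisted crossed product commutes with the tensor decomposition $C([0,1])\otimes\bigoplus_{\tau\in\cO}C_0(X_{\tau},\text{End}(V_{\tau}))$, so that
\[ (\tilde K,K)\ltimes \bigoplus_{\tau\in\cO} C_0(X_{\tau}\times[0,1],\text{End}(V_{\tau})) \;\cong\; C([0,1])\otimes D_{\cO},\qquad D_{\cO}=(\tilde K,K)\ltimes\bigoplus_{\tau\in\cO}C_0(X_{\tau},\text{End}(V_{\tau})), \]
and under this identification evaluation at $t$ is $\mathrm{ev}_t\otimes\mathrm{id}_{D_{\cO}}$. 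The inclusion of $D_{\cO}$ as constant functions is a homotopy inverse to $\mathrm{ev}_t\otimes\mathrm{id}_{D_{\cO}}$, so homotopy invariance of $K$-theory shows that evaluation at $t$ induces an isomorphism $K_*(\mathcal{\tilde C}_{\cO})\to K_*((\tilde C_{\cO})_t)$.

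Finally I would assemble the pieces. Evaluation at $t$ maps the extension $0\to\mathcal{\tilde J}_{n-1}\to\mathcal{\tilde J}_n\to\mathcal{\tilde C}_{\cO_n}\to 0$ to the corresponding extension of fibres; applying the six-term $K$-theory exact sequence and the five lemma, an induction on $n$ shows that evaluation induces an isomorphism $K_*(\mathcal{\tilde J}_n)\to K_*((\tilde J_n)_t)$ for every $n$. Since $K$-theory is continuous for inductive limits of $C^*$-algebras and $C^*_{\lambda}(\mathcal{\tilde G})=\overline{\bigcup_n\mathcal{\tilde J}_n}$, $C^*_{\lambda}(\tilde G_t)=\overline{\bigcup_n(\tilde J_n)_t}$, passing to the limit gives the claimed isomorphism. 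The only ingredient that is not a formal manipulation of ideals, full corners and crossed products is the presence of the product factor $[0,1]$ in $\mathcal{\tilde C}_{\cO}$ — that is, the constancy of the continuous field over the fixed-minimal-$K$-type strata — which is precisely Higson's Theorem~\ref{higson constant}, already used in the construction; so the proof presents no genuine obstacle beyond this bookkeeping.
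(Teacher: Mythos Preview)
Your proposal is correct and follows essentially the same route as the paper: reduce to the subquotients $\mathcal{\tilde C}_{\cO}$ via a filtration indexed by $F$-orbits in $\widehat{K}$, pass to the full corner $p_{\cO}\mathcal{\tilde C}_{\cO}p_{\cO}$, identify it with a twisted crossed product carrying a trivial $[0,1]$-factor, and invoke homotopy invariance. The paper's own proof is terser only because it defers the filtration/five-lemma/direct-limit bookkeeping to \cite[Section~7]{MR2391803}, whereas you spell it out; the paper also observes (again citing Higson) that it suffices to treat a single $t\in(0,1]$, but your argument handles all $t$ uniformly, which is harmless.
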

\begin{proof}
As detailed in \cite[Section 7]{MR2391803}, it suffices to establish that the homomorphism $C^*_{\lambda}(\mathcal{\tilde G}) \to C^*_{\lambda}(\tilde G_t)$ is an isomorphism for $t=1$, or indeed for any $t \in (0,1]$. Denoting by $\tilde C_{\cO}$ as in the previous section the subquotient of $C^*_{\lambda}(\tilde G_1) = C^*_{\lambda}(\tilde G)$ defined in Definition~\ref{orbit multipliers}, evaluation of sections at $t=1$ induces a homomorphism $\mathcal{\tilde C}_{\cO} \to \tilde C_{\cO}$.
As described by Higson, according to basic properties of $K$-theory it is sufficient for us to prove that this homomorphism induces an isomorphism of $K$-theory groups. Now the invariance of $K$-theory under Morita equivalence and Theorem~\ref{field morita} lead us to consider the commutative diagram
\[ \xymatrix{ (\tilde K,K) \ltimes p_{\cO}\mathcal{C}_{\cO}p_{\cO} \ar[d]\ar[r] & (\tilde K,K) \ltimes p_{\cO}C_{\cO}p_{\cO} \ar[d] \\
(\tilde K,K) \ltimes \bigoplus_{\tau \in \cO} C_0(X_{\tau} \times [0,1], \text{End}(V_{\tau})) \ar[r] & (\tilde K,K) \ltimes \bigoplus_{\tau \in \cO}  C_0(X_{\tau}, \text{End}(V_{\tau})).   } \]
The downward maps are isomorphisms and the left to right maps are induced by evaluation at $1 \in [0,1]$. Thus we need only show that bottom evaluation homomorphism induces an isomorphism of $K$-theory groups, and this follows from the homotopy invariance of $K$-theory.
\end{proof}

\bibliographystyle{alpha}
\bibliography{BIB}

\end{document}